\numberwithin{equation}{section}
\newtheorem{teo}{Theorem}[section]
\newtheorem{theorem}{Theorem}[section]
\newtheorem{proposition}{Proposition}[section]
\newtheorem{lemma}{Lemma}[section]
\newtheorem{question}{Question}[section]
\newcommand{\al}{\alpha}
\newcommand{\bal}{\boldsymbol{\alpha}}
\newcommand{\bbe}{\boldsymbol{\beta}}
\newcommand{\ov}[1]{\overline{#1}}
\newcommand{\ve}{\varepsilon}
\theoremstyle{definition}
\theoremstyle{remark}
\newtheorem{remark}[teo]{Remark}
\begin{document}
\bibliographystyle{amsplain}\

\title{Smooth solutions of  degenerate linear parabolic \\ equations and the porous medium equation }

\author[A. Chau]{Albert Chau}
\address{Department of Mathematics, The University of British Columbia, 1984 Mathematics Road, Vancouver, B.C.,  Canada V6T 1Z2.} 
\author[B. Weinkove]{Ben Weinkove}
\address{Department of Mathematics, Northwestern University, 2033 Sheridan Road, Evanston, IL 60208, USA.}

\thanks{Research supported in part by  NSERC grant $\#$327637-06 and NSF grant DMS-2005311.}

\maketitle

\vspace{-20pt}

\begin{abstract}  We prove existence of smooth solutions to linear degenerate parabolic equations on bounded domains assuming  a structure condition of Fichera.  We use this to give a proof of a smooth short time existence result for the porous medium equation $u_t = \Delta u^m$ for $1<m\le 2$.
\end{abstract}

\maketitle

\section{Introduction}

We consider the existence and uniqueness of smooth solutions to a class of degenerate linear parabolic equations.   We revisit an approach introduced by Kohn-Nirenberg \cite{KN} for degenerate elliptic-parabolic equations where a structure condition due to Fichera \cite{Fi56, Fi60} plays a key role (denoted  (B) below). While Kohn-Nirenberg's theorem does not apply directly to our setting, we exploit their methods to prove a general existence result, Theorem \ref{theoremmainintro} below, for linear parabolic equations which degenerate at the boundary assuming the Fichera condition when $n\ge 2$.

There is a vast literature on the subject of degenerate linear parabolic equations, with many different approaches, and we refer the reader to \cite{DH98, DH99, DPT, FP, Fi56, Fi60, JX, KN, Ol, OR} and the references therein.

Let $\Omega_0 \subset \mathbb{R}^n$ be a domain with smooth boundary $\partial \Omega_0$.  We consider the following equation for $w(x,t)$:
\begin{equation} \label{leintro}
\begin{split}
\frac{\partial w}{\partial t} =  {} & a^{ij} w_{ij} + b^i w_i + fw + g, \quad \textrm{on } \ov{\Omega}_0 \times [0,\infty) \\
 \quad w(x,0)={}& 0, \quad \textrm{for } x\in \ov{\Omega}_0, 
 \end{split}
\end{equation}
for  $a^{ij}, b^i, f$ and $g$ smooth functions on $\ov{\Omega}_0 \times [0,\infty)$ with $(a^{ij})$ symmetric and $(a^{ij}) >0$ on the open set $\Omega_0$.  We note two important points, which are related:
\begin{enumerate}
\item[(1)] No boundary data for $w$ is specified on $\partial \Omega_0$.  
\item[(2)] The matrix $(a^{ij})$ may be degenerate (have a nontrivial kernel) on $\partial \Omega_0$.
\end{enumerate}
A basic question which we wish to address is as follows.

\begin{question} \label{mainquestion} Under what conditions on $a^{ij}, b^i, f, g$ does (\ref{leintro}) admit a unique smooth solution on $\ov{\Omega}_0 \times [0,\infty)$?
\end{question}

Note that the matrix $(a^{ij})$ \emph{must} be degenerate in general on $\partial \Omega_0$ for there to be uniqueness, given that no boundary data is specified.  We now discuss our assumptions under which we prove that Question \ref{mainquestion} has a positive answer.

  We consider two assumptions on the matrix $(a^{ij})$ on the boundary.  The first is
\begin{equation} \tag{A$_1$} 
a^{ij}\nu_i\nu_j  =0, \quad  \textrm{ on }\partial \Omega_0,
\end{equation}
where $\nu=\nu(x)$ is the outward pointing normal at $x$.  By the Cauchy-Schwarz inequality, (A$_1$) is equivalent to $a^{ij} \nu_i=0$ for all $j$.

The second is that
\begin{equation} \tag{A$_2$} 
 (\partial_k a^{ij})\nu_k \nu_{i} \nu_j <0, \quad  \textrm{ on }\partial \Omega_0.
\end{equation}
Note that if (A$_1$) is assumed then since $(a^{ij})>0$ in the interior of $\Omega_0$ we automatically have $(\partial_k a^{ij})\nu_k \nu_{i} \nu_j \le 0$, so the point of (A$_2$) is that we have strict inequality.

Next we consider the Fichera condition
\begin{equation} \tag{B} 
(b^i - \partial_j a^{ij})\nu_i \le 0,  \quad  \textrm{ on }\partial \Omega_0.
\end{equation}
We also consider two alternatives to this.  The first is stronger:
\begin{equation} \tag{B'} 
(b^i - \partial_j a^{ij})\nu_i < 0,  \quad  \textrm{ on }\partial \Omega_0.
\end{equation}
 In the case of dimension $n=1$ we consider instead
\begin{equation} \tag{B''} 
b^i \nu_i \le 0,\quad  \textrm{ on }\partial \Omega_0,
\end{equation}
which, given (A$_2$), is weaker than (B).

Finally, we make the assumption that
\begin{equation} \tag{C} 
D_t^k g(x,0)=0, \quad \textrm{for all } x \in \ov{\Omega}_0, \ k=0,1,2, \ldots.
\end{equation}

We prove the following.

\begin{theorem}\label{theoremmainintro} Suppose that \emph{(A$_1$), (A$_2$),  (B)} and \emph{(C)} hold. Then the equation \eqref{leintro} has a unique solution $w(x, t)\in C^{\infty}(\ov{\Omega}_0\times[0, \infty))$.

If $n=1$, the same is true if we replace \emph{(B)} by the weaker assumption \emph{(B'')}.
\end{theorem}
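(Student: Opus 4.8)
The plan is to reduce the problem to a non-degenerate parabolic equation on an inward-pushed domain, obtain uniform estimates, and pass to the limit — the classical Kohn--Nirenberg strategy adapted to the parabolic setting. First I would construct an approximating family: for small $\ve>0$, let $\Omega_\ve \subset\subset \Omega_0$ be a smooth domain whose boundary is the level set $\{d = \ve\}$ of the distance function $d(x) = \operatorname{dist}(x,\partial\Omega_0)$ (smooth near $\partial\Omega_0$), and solve
\begin{equation*}
\ddt{w_\ve} = a^{ij}(w_\ve)_{ij} + b^i (w_\ve)_i + f w_\ve + g \quad \text{on } \ov{\Omega}_\ve\times[0,\infty),
\end{equation*}
with $w_\ve(x,0)=0$ and, say, Dirichlet boundary condition $w_\ve = 0$ on $\p\Omega_\ve$ — or better, a boundary condition engineered using (A$_1$), (A$_2$), (B) so that it is "compatible to infinite order" with the initial data via (C), guaranteeing $w_\ve \in C^\infty(\ov\Omega_\ve\times[0,\infty))$ by standard linear parabolic theory. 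Since $(a^{ij})>0$ on $\Omega_0$, each such problem is uniformly parabolic on $\ov\Omega_\ve$, so short-time (in fact long-time, $f$ and $g$ being given) smooth existence is classical.

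The heart of the argument is uniform-in-$\ve$ $C^k$ estimates on compact subsets of $\ov\Omega_0\times[0,T]$, up to and including the boundary $\p\Omega_0$. The $C^0$ bound comes from the maximum principle together with (C): since $g$ and all its $t$-derivatives vanish at $t=0$, one gets $\|w_\ve\|_{C^0}$ controlled by $Te^{CT}\sup|g|$ independent of $\ve$. For higher derivatives one differentiates the equation; the tangential derivatives are controlled much as in the non-degenerate case, but the normal derivatives at $\p\Omega_0$ are the subtle point, and this is exactly where the Fichera condition (B) enters. Writing the equation in coordinates adapted to $\p\Omega_0$ (normal coordinate $d$ and tangential coordinates), condition (A$_1$) says $a^{dd} = a^{ij}\nu_i\nu_j$ vanishes on $\p\Omega_0$, and (A$_2$) says it vanishes to exactly first order with the right sign, so $a^{dd} \sim c\, d$ near the boundary with $c>0$; the operator is then, in the normal direction, of the degenerate form $c\, d\, \p_d^2 + (\text{lower order})\p_d$, and the coefficient of $\p_d$ is governed by $b^i\nu_i - (\p_j a^{ij})\nu_i$. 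Condition (B) ensures the first-order term points the right way so that a weighted barrier/maximum-principle argument (à la Fichera, Oleinik--Radkevich, Kohn--Nirenberg) gives a uniform bound on $\p_d w_\ve$, and then inductively on all derivatives. In dimension $n=1$ there are no tangential directions, the relevant degenerate ODE-in-space structure simplifies, and (B'') suffices in place of (B) — one checks that the sign of $b$ alone controls the boundary behavior once (A$_2$) is in force.

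With uniform $C^k$ bounds for every $k$ on every compact subset (including $\p\Omega_0$) and every $[0,T]$, the Arzelà--Ascoli theorem and a diagonal argument extract a subsequence $w_{\ve} \to w$ in $C^\infty_{\mathrm{loc}}(\ov\Omega_0\times[0,\infty))$; $w$ solves \eqref{leintro} and is smooth up to the boundary, with no boundary data imposed (the approximate boundary data having been pushed off to infinity along with $\p\Omega_\ve \to \p\Omega_0$). For uniqueness, given two smooth solutions their difference $v$ satisfies the homogeneous equation with zero initial data; the key is that (A$_1$)--(A$_2$) force $(a^{ij})$ to be "tangent to the boundary" strongly enough that $v$ cannot concentrate there, so an energy estimate or a weighted maximum principle — integrating against a suitable power of $d$ and using (B) to kill the boundary term arising in integration by parts — yields $v\equiv 0$. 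The main obstacle I anticipate is making the boundary derivative estimates genuinely uniform in $\ve$: one must set up barriers of the form $A d^{\gamma} + B t$ or exponential variants, track how the constant $c$ in $a^{dd}\sim cd$ and the Fichera quantity interact through the differentiated equations, and ensure the induction on the order of derivatives closes — this is the technical core where the Kohn--Nirenberg machinery, rather than a soft compactness argument, does the real work.
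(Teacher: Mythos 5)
Your approach is genuinely different from the paper's, but it has a real gap, and it also mis-attributes the method you propose.

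What the paper actually does, following Kohn–Nirenberg, is \emph{not} domain shrinking. It keeps the fixed domain $\Omega_0$ and regularizes the operator, adding $-\ve L_{2N}$ where $L_{2N}$ is a carefully built elliptic operator of order $2N$, with boundary conditions imposed only on normal derivatives of order $N$ through $2N-1$. The a priori bounds are $L^2$ energy estimates on the quantities
$$I_N(t) = \int w^2 + \sum_\lambda \int \Bigl( (-Y)(D_Y^N w)^2 + \sum_{|\bal|=N} (D_X^{\bal} w)^2\Bigr)\xi_\lambda^2 + \int \sum_{|\mathbf a|=N}(D^{\mathbf a}w)^2\xi_0^2,$$
proved via integration by parts, and conditions (A$_1$), (A$_2$), (B) appear precisely as sign conditions for the boundary terms. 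The crucial subtlety is that (B) is not strict, so the maximum-principle barriers you envisage will not close an induction by themselves; the paper compensates by using (A$_2$) together with the large parameter $N$, so that a term of the form $(N-1)D_Y a^{nn} + b^n - \p_j a^{nj}$ is made strictly negative by taking $N$ large. Your proposal has no analogous parameter to exploit, and since you only assume $(b^i - \p_j a^{ij})\nu_i \le 0$ with equality allowed, a barrier of the form $A d^\gamma + Bt$ will in general not produce a strict sign.

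The more fundamental gap is the one the paper explicitly warns about: ``how to regularize the equation without introducing unwanted boundary conditions.'' Imposing $w_\ve = 0$ on $\p\Omega_\ve$ is an artificial condition that the limit problem does not satisfy (the true solution is not zero on $\p\Omega_\ve$), so you create a boundary layer near $\p\Omega_\ve$. On $\p\Omega_\ve$ the operator is uniformly parabolic, just with small ellipticity constant $\sim\ve$, so the mismatch between the imposed zero data and the true interior values is resolved over a thin layer whose width shrinks but whose derivative norms blow up as $\ve\to 0$. Claiming uniform $C^k$ bounds ``up to and including $\p\Omega_0$'' from this family is not justified by the sketch — the boundary-layer contribution must be shown to decouple from the interior estimates, and that is exactly the hard analysis you have not supplied. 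The paper's high-order regularization avoids this entirely: its boundary conditions are imposed only on derivatives of order $\ge N$, and as $N\to\infty$ these conditions impose no constraint on a fixed finite-order jet. Your uniqueness argument (maximum principle / energy estimate using (A$_1$), (B), integration by parts in $d$) is essentially the same as the paper's Proposition \ref{propunique} and is fine.
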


Moreover, in Theorem \ref{theoremmain} below, we establish estimates on $w$ in terms of the coefficients $a^{ij}$, $b^i$, $f$ and $g$.  A difficulty in proving existence of solutions to degenerate equations  is how to regularize the equation without introducing unwanted boundary conditions.  We exploit  Kohn-Nirenberg's trick of adding an operator of high order $2N$ with boundary conditions on the $N$th and higher normal derivatives to the boundary.  As $N$ tends to infinity, the boundary condition disappears!

\begin{remark} \label{remarkA1} \ 
\begin{enumerate}
\item[(i)] This result is not covered by Kohn-Nirenberg's theorem \cite{KN} (see also \cite{OR}) because the setup does not satisfy their crucial assumption (a).  They  do treat the elliptic version of the above result, with no time variable, under the assumption that $-f$ is very large and positive.   They do not assume (A$_2$), so it is natural to expect that this condition can be removed here too.  We show in the course of the proof that   (A$_2$) can be removed  if (B) is replaced by the stronger assumption (B'). 
\item[(ii)] The assumption (A$_1$) is invariant under smooth coordinate changes in $\mathbb{R}^n$.  If (A$_1$) holds then assumptions 
(A$_2$), (B) and (B') are also invariant under coordinate changes in $\mathbb{R}^n$.  For a proof, see \cite[Theorem 1.1.1]{OR} which covers all cases except for (A$_2$), which can be proved by a similar method.  We also note that if all components of the matrix $(a^{ij})$ vanish on $\partial \Omega_0$ (which occurs in dimension $n=1$) then condition (B'') is also invariant under coordinate changes.  
\item[(iii)] Some model cases of Theorem \ref{theoremmainintro} are dealt with by Daskalopoulos-Hamilton \cite{DH98, DH99}, and we note that in \cite{DH98} condition (B'') is used instead of (B).
\item[(iv)] Condition (C) can be weakened by insisting that the time derivatives of $g$ at $t=0$ vanish only in a neighborhood of $\partial \Omega_0$. 
\end{enumerate}
\end{remark}

Our interest in degenerate linear parabolic equations is motivated by \emph{nonlinear} equations such as the porous medium equation $u_t = \Delta u^m$, where $m>1$ is a fixed constant.  There have been various approaches to understand the existence of smooth short-time solutions of the porous medium equation under a non-degeneracy condition, by Daskalopoulos-Hamilton \cite{DH98}, Koch \cite{Koch} (see also 
Shmarev \cite{Sh}). All are technically difficult and use a variety of different methods.  We expect that these and similar problems should fall into a more general framework.

The porous medium equation is a free boundary problem which, through a
Hanzawa transformation \cite{Hanz}, becomes a degenerate nonlinear equation on a fixed
domain but with no boundary conditions.  One should be able to prove short time existence by simply linearizing this equation and applying the Nash-Moser Inverse Function Theorem.   This reduces the problem to a degenerate linear parabolic equation, of the type described above.  What we found, to our surprise, is that the Fichera condition corresponds to the restriction $m\le 2$, which is why our result on the porous medium equation, Theorem \ref{mainthm} below, has this restriction (except when $n=1$, which is easier).

We now describe our result more precisely.  The porous medium equation 
\begin{equation} \label{PME}
u_t = \Delta (u^m), 
\end{equation}
with initial data
$$u(x,0)=u_0(x)$$
models the density of gas $u(x,t)\ge 0$ diffusing in a porous medium.  Here we take $(x,t) \in \mathbb{R}^n \times [0,\infty)$ and a fixed constant $m>1$.  

Since the equation (\ref{PME}) is degenerate where $u=0$, we interpret it in a weak sense.  While one can construct solutions of the porous medium equation for initial data in $L^1(\mathbb{R}^n)$, in this paper we are concerned only with more regular solutions.  For simplicity we always assume that the initial data $u_0$ is  H\"older continuous and the set $\Omega_0 = \{ u_0>0 \}  \subset \mathbb{R}^n$ is connected and bounded.

By a result of Caffarelli-Friedman \cite{CF80}, there exists a unique  H\"older continuous function $u$ on $\mathbb{R}^n \times [0,\infty)$  with $u_t$ and $\Delta (u^m)$ in $L^1_{\textrm{loc}}(\mathbb{R}^n \times [0,\infty))$ and such that (\ref{PME}) holds almost everywhere.   The function $u$ is smooth on the union of open sets $\Omega_t$ ``where there is gas'', defined by
$$\Omega_t = \{ x\in \mathbb{R}^n \ | \ u(x,t)>0\}.$$
Each $\Omega_t$ is compactly supported in $\mathbb{R}^n$.  We refer the readers to V\'azquez' comprehensive text  \cite{V} for more details and background on the porous medium equation.

It is convenient to work with the \emph{pressure}  $v: = \frac{m}{m-1} u^{m-1}$, satisfying
\begin{equation} \label{PME2}
v_t = (m-1) v \Delta v + |\nabla v|^2,
\end{equation}
on the sets $\Omega_t$ for all $t$ and with initial data $v(x,0) = v_0(x) = \frac{m}{m-1} u_0^{m-1}(x)$.  A short formal calculation shows that, assuming sufficient regularity, the sets $\Omega_t$ move in the direction of the outward normal with speed $|\nabla v|$.  

We prove the following.

\begin{theorem} \label{mainthm} Assume that $1 < m \le 2$ if $n \neq  1$.  Suppose that $v_0 \in C^{\infty} (\ov{\Omega}_0)$ and nondegenerate, meaning that $|\nabla v_0| >0$ on $\partial \Omega_0$.

Let $v=v(x,t)$ be the solution of the porous medium equation (\ref{PME2}).
Then there exists a constant $T>0$ such that the solution $v$ is smooth on the closed set
$$\bigcup_{t \in [0,T]} \ov{\Omega}_t$$
and the boundaries $\partial \Omega_t$ for $t \in [0,T]$ are smooth hypersurfaces in $\mathbb{R}^n$.
\end{theorem}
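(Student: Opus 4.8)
The plan is to turn the free boundary problem \eqref{PME2} into a degenerate \emph{nonlinear} parabolic equation on the fixed domain $\ov{\Omega}_0$ carrying no boundary data, and then to solve that equation by linearization together with the Nash--Moser inverse function theorem, invoking Theorem \ref{theoremmainintro} and the quantitative Theorem \ref{theoremmain} to handle the linearized equations. First I would fix the domain by a Hanzawa transformation \cite{Hanz}: since $|\nabla v_0|>0$ on $\partial\Omega_0$, for small $t$ the free boundary $\partial\Omega_t$ is a normal graph $\{x+\rho(x,t)\nu(x):x\in\partial\Omega_0\}$ over $\partial\Omega_0$ with $\rho(\cdot,0)=0$; extending $\rho$ inward by a fixed cutoff gives a diffeomorphism $\Phi_t\colon\ov{\Omega}_0\to\ov{\Omega}_t$, equal to the identity away from a collar and depending smoothly (and tamely) on $\rho$. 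The pulled-back pressure $\ti v(x,t):=v(\Phi_t(x),t)$ then satisfies a quasilinear equation of the schematic form $\ti v_t=(m-1)\ti v\,\triangle_\rho\ti v+|\nabla_\rho\ti v|^2+(\nabla\ti v)\cdot\p_t\Phi_t$ on $\ov{\Omega}_0$, where $\triangle_\rho,\nabla_\rho$ denote the pull-backs of $\triangle,\nabla$ under $\Phi_t$, with $\ti v\equiv0$ on $\partial\Omega_0$ and \emph{no other} boundary condition. The free boundary speed condition (the boundary moves with normal speed $|\nabla v|$) becomes an evolution equation $\p_t\rho=\mathcal N(\ti v,\rho)$ for $\rho$ on $\partial\Omega_0$, depending on the boundary values of $\nabla\ti v$; thus $(\ti v,\rho)$ solves a coupled system in which the $\rho$-component is essentially an ODE in $t$.

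Next I would build an approximate solution. Differentiating \eqref{PME2} in $t$ expresses all time derivatives of $v$ at $t=0$ explicitly in terms of $v_0$, so the system can be solved formally to infinite order at $t=0$ with smooth Taylor coefficients; Borel summation produces a smooth pair $(\ov v,\ov\rho)$ on $\ov{\Omega}_0\times[0,T]$ solving the system up to an error vanishing to infinite order at $t=0$. Writing $\ti v=\ov v+w$, $\rho=\ov\rho+\sigma$ reduces the problem to $w_t=a^{ij}w_{ij}+b^iw_i+fw+g+Q(w,\sigma)$ on $\ov{\Omega}_0\times[0,T]$ with $w(\cdot,0)=0$, coupled with a linear-plus-error ODE for $\sigma$, where $a^{ij},b^i,f$ are the coefficients of the linearization at $(\ov v,\ov\rho)$, $g$ is minus the error (so $g$ satisfies (C) by construction), and $Q$ is the nonlinear remainder. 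I would then solve for $(w,\sigma)$ in the tame Fr\'echet spaces $C^\infty(\ov{\Omega}_0\times[0,T])$ by the Nash--Moser inverse function theorem: at each iteration the linearized operator in $w$ has exactly the structure of \eqref{leintro}, so — provided (A$_1$), (A$_2$) and (B) (or (B'') when $n=1$) hold for its coefficients — Theorem \ref{theoremmainintro} gives existence and uniqueness and Theorem \ref{theoremmain} provides the tame estimates (with a fixed loss of derivatives) needed to run the scheme, the $\sigma$-block being a harmless linear ODE and the coupling of lower order. This yields smooth $\ti v$ and $\rho$; untransforming, $v$ is smooth on $\bigcup_{t\le T}\ov\Omega_t$ with each $\partial\Omega_t$ a smooth hypersurface, and by uniqueness of the weak solution of \cite{CF80} this is the $v$ in the statement.

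It remains to verify the structure conditions for the linearization, and this is exactly where the hypothesis $m\le2$ appears. At $t=0$ one has $\Phi_0=\mathrm{id}$ and, near $\partial\Omega_0$, $\ti v\to0$ linearly with $\nabla\ti v=(\p_\nu\ti v)\nu=-|\nabla v_0|\,\nu$ on $\partial\Omega_0$ (since $\ti v>0$ inside and $\ti v=0$ on $\partial\Omega_0$). From the second-order term $(m-1)\ti v\,\triangle_\rho\ti v$ one reads $a^{ij}=(m-1)\ti v\,\ti g_\rho^{ij}+(\text{lower order})$, so on $\partial\Omega_0$: $a^{ij}\nu_i\nu_j=0$, which is (A$_1$); $(\p_ka^{ij})\nu_k\nu_i\nu_j=(m-1)(\p_\nu\ti v)=-(m-1)|\nabla v_0|<0$, which is (A$_2$); and $(\p_j a^{ij})\nu_i=(m-1)(\p_j\ti v)\ti g_\rho^{ij}\nu_i=-(m-1)|\nabla v_0|$. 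Linearizing $|\nabla_\rho\ti v|^2$ contributes $-2|\nabla v_0|\,\nu^i$ to $b^i$ on $\partial\Omega_0$, and the transport term $(\nabla\ti v)\cdot\p_t\Phi_t$ contributes $+|\nabla v_0|\,\nu^i$ (the boundary moving outward at speed $|\nabla v_0|$), so $b^i\nu_i=-|\nabla v_0|$ there. Hence
\[
(b^i-\p_j a^{ij})\nu_i=-|\nabla v_0|+(m-1)|\nabla v_0|=(m-2)\,|\nabla v_0|\quad\text{on }\partial\Omega_0,
\]
which is $\le0$ precisely when $m\le2$: this is condition (B). For $m<2$ the inequality is strict, so (B') holds and the structure conditions are stable under the small perturbations of $(\ov v,\ov\rho)$ occurring in the iteration and for all small $t$; the borderline case $m=2$ is covered by Theorem \ref{theoremmainintro} thanks to (A$_2$). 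When $n=1$ all entries of $(a^{ij})$ vanish on $\partial\Omega_0$, so one only needs (B''), namely $b^i\nu_i=-|\nabla v_0|\le0$, which holds for every $m>1$ — this is why the one-dimensional case has no restriction on $m$.

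I expect the main obstacle to be this last step: carrying out the boundary computation carefully enough to extract the precise coefficient $(m-2)|\nabla v_0|$, and confirming that (A$_1$), (A$_2$), (B)/(B'') persist for the linearization at every state $(\ov v+w,\ov\rho+\sigma)$ met during the Nash--Moser iteration and uniformly for small $t$. A secondary difficulty is the bookkeeping for the coupled $(\ti v,\rho)$ system and the Borel-summed approximate solution, so that the hypotheses of Theorems \ref{theoremmainintro} and \ref{theoremmain} — in particular (C) — are genuinely met and the tame estimates close the iteration.
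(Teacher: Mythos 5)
Your overall strategy (fix the domain, construct a formal solution to infinite order at $t=0$, invoke the degenerate linear theory, close with Nash--Moser) matches the paper's, and your boundary computation giving $(b^i-\partial_j a^{ij})\nu_i \propto (m-2)|\nabla v_0|$ on $\partial\Omega_0$ is exactly the mechanism that produces the restriction $m\le 2$.

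However, there is a genuine gap in the domain-fixing step. You propose a standard Hanzawa transformation $\Phi_t$ moving $\partial\Omega_0$ along its normal by a height $\rho$, pulling back $v$ to $\tilde v=v\circ\Phi_t$. But then $\tilde v=0$ on $\partial\Omega_0$ is forced, and the linearized problem for the perturbation $w$ of $\tilde v$ inherits the Dirichlet condition $w=0$ on $\partial\Omega_0$, coupled to an ODE for $\sigma=\delta\rho$ driven by boundary values of $\nabla w$. Theorem \ref{theoremmainintro}/\ref{theoremmain} is precisely \emph{not} a theorem about such a system: it concerns a single scalar equation with \emph{no} boundary data, and indeed when (A$_1$), (A$_2$), (B) hold the boundary is characteristic and boundary data cannot be freely prescribed; imposing $w=0$ would in general be incompatible with the unique solution the linear theorem produces. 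So one cannot ``invoke Theorem \ref{theoremmainintro} at each iteration'' for the $w$-block while treating the $\sigma$-block as a harmless ODE. Making the coupled system rigorous would require a different, genuinely coupled linear theory (closer in spirit to Daskalopoulos--Hamilton or Koch), not what Section 2 of the paper provides.

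The paper sidesteps this entirely by a different change of variables. Instead of parametrizing the moving domain, one parametrizes the moving \emph{graph} $y=v(x,t)\subset\mathbb R^{n+1}$: the transverse field $V(x)=(-\nabla v_0(x),v_0(x))$, which is tangent to $\{y=0\}$ along $\partial\Omega_0$, is used to define a single scalar unknown $h(x,t)$ via
$(1+h(x,t))v_0(x)=v\bigl(x-\nabla v_0(x)\,h(x,t),\,t\bigr)$.
Because $v_0=0$ on $\partial\Omega_0$, the boundary value $h|_{\partial\Omega_0}$ is the free-boundary displacement and is \emph{not} constrained; because $v_0>0$ in the interior, interior values of $h$ encode the pressure profile. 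The result is a single second-order equation $h_t=F(h,x)$ on the fixed $\ov\Omega_0$ with literally no boundary condition, whose linearization $L w=a^{ij}w_{ij}+b^iw_i+fw$ has $a^{ij}\equiv 0$ on $\partial\Omega_0$ and satisfies (A$_1$), (A$_2$), and (B) exactly when $1<m\le 2$ (Proposition \ref{propPME}). This is the precise setup of Theorem \ref{theoremmain}, so the tame estimate \eqref{tameest} can feed directly into Nash--Moser. If you want to salvage your version you would need either to reformulate it so that the boundary constraint and the boundary ODE are absorbed into a single unconstrained scalar unknown (which is what the paper's $h$ does), or to prove a separate well-posedness theorem for the coupled $(w,\sigma)$ system; as written, the step ``apply Theorem \ref{theoremmainintro} to the linearization'' does not go through.
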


We make the following remarks.

\begin{remark}   \ 
\begin{enumerate}
\item[(i)]  The nondegeneracy condition $|\nabla v_0| >0$ on $\partial \Omega_0$ means that the boundary initially moves with nonzero speed at all points, and is relevant for example in studies of concavity of solutions \cite{ChW, DHL, IS}.  Note also that $\partial \Omega_0$ is smooth by our assumptions.  By shrinking $T>0$ if necessary, we may conclude that $v$ is nondegenerate for $t \in [0,T]$.
\item[(ii)] In the above, the existence of a weak solution to the porous medium equation is assumed only for convenience.  The proof of Theorem \ref{mainthm} also produces a solution for a short time.
\item[(iii)] If  Theorem \ref{theoremmainintro} holds with (A$_1$) replaced by the assumption that the components $a^{ij}$ all vanish on $\partial \Omega_0$ and (B) replaced by (B'') then the restriction $m\le 2$ could be removed.
\item[(iv)] Daskalopoulos-Hamilton \cite{DH98} and Koch's Habilitation \cite{Koch} do not have the restriction $m\le 2$.  They assume less regularity at $t=0$ and show smoothness  for $t>0$ (they do not address smoothness of the solution going back to $t=0$).  For earlier work on regularity, see \cite{CVW, CW}.  In dimension $n=1$, the result of  Theorem \ref{mainthm} was proved by Aronson-V\'azquez \cite{AV} and H\"ollig-Kreiss \cite{HK} independently (see also \cite{An}).  All of these references use different methods from ours.
\item[(v)] Physically, the case $m=2$ is particularly important, coinciding with Boussinesq's equation for groundwater flow.  In terms of flows of gases, the case $m=2$ arises as a model for isothermal processes, whereas $m>2$ corresponds to adiabatic processes.  We refer the reader to the discussion and references in \cite[Chapter 2]{V}, for example. 
\item[(vi)] Similar methods can deal with some cases of the degenerate Gauss curvature flow (a ``rolling stone with a flat side'', cf.  \cite{DH99}). The authors will treat this in a follow-up work.

\end{enumerate}
\end{remark}

The outline of the paper is as follows.  In Section  \ref{Solving}, we prove Theorem \ref{theoremmainintro} on the linear degenerate equation.  In Section \ref{sectiontrans}, we write the porous medium equation as a nonlinear equation on a fixed domain, with no boundary conditions and show how the Fichera condition for the linearized equation corresponds to $m \le 2$.  Finally we prove Theorem \ref{mainthm} in Section \ref{sectionlast} using the Nash-Moser Inverse Function Theorem.

\bigskip
\noindent
{\bf Acknowledgements.} \ The second-named author thanks Jared Wunsch for a helpful conversation.

\section{Solving the linear equation} \label{Solving}

Let $\Omega_0 \subset \mathbb{R}^n$ be a domain with smooth boundary $\partial \Omega_0$.  Fix $T$ with $0<T<\infty$. We wish to construct a smooth solution $w(x,t)$  to the equation
\begin{equation} \label{le}
\begin{split}
\frac{\partial w}{\partial t} =  {} & a^{ij} w_{ij} + b^i w_i + fw + g, \quad \textrm{on } \ov{\Omega}_0 \times [0,T] \\
 \quad w(x,0)={}& 0, \quad \textrm{for } x\in \ov{\Omega}_0,
 \end{split}
\end{equation}
for $a^{ij}, b^i, f$ and $g$ smooth functions on $\ov{\Omega}_0 \times [0,T]$ and $(a^{ij})$ symmetric. 

We will prove:

\begin{theorem}\label{theoremmain}Suppose that \emph{(A$_1$), (A$_2$),  (B)} and \emph{(C)} hold.
Then the equation \eqref{le} has a unique solution $w(x, t)\in C^{\infty}(\ov{\Omega}_0\times[0, T])$.  Moreover, there exist an integer $r>0$ depending only the dimension $n$ and constants $C(\ell)$ depending only on $n, \ell, T, \Omega_0$ and spatial $C^r$ bounds of $a^{ij}, b^i, f$ such that for $\ell=0,1,2,\ldots,$
	\begin{equation}\label{tameest}
		\|w(t)\|_{C^\ell} \leq C(\ell) \sup_{t \in [0,T]}\bigg( 1+ \sum_{i,j} \|a^{ij}\|_{C^{\ell+r}} + \sum_i \|b^i\|_{C^{\ell+r}}+ \| f\|_{C^{\ell+r}} + \|g\|_{C^{\ell+r}}\bigg),
	\end{equation}
where $\| \cdot \|_{C^\ell}$ denotes the usual $C^{\ell}(\ov{\Omega}_0)$ norm.

The same holds if $n=1$ and assumption \emph{(B)} is replaced by \emph{(B'')}.
	\end{theorem}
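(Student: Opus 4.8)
The plan is to follow the Kohn–Nirenberg strategy of regularizing \eqref{le} by adding a high-order elliptic term with boundary conditions placed far enough out (on the $N$-th and higher normal derivatives) that they impose no effective constraint in the limit $N\to\infty$. Concretely, for a large even integer $2N$ I would consider the regularized parabolic problem
\begin{equation*}
\frac{\partial w_N}{\partial t} = -\ve_N \,\triangle^{N} w_N + a^{ij} (w_N)_{ij} + b^i (w_N)_i + f w_N + g,
\end{equation*}
on $\ov{\Omega}_0 \times [0,T]$, with $w_N(x,0)=0$ and boundary conditions $\p_\nu^k w_N = 0$ on $\p\Omega_0$ for $N \le k \le 2N-1$. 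For each fixed $N$ this is a (possibly degenerate-in-the-lower-order-part but) uniformly parabolic higher-order linear problem with a complete set of boundary conditions, so standard linear parabolic theory gives a unique smooth solution $w_N$. The core of the argument is then an $N$-independent a priori estimate of the form \eqref{tameest} (with $\ve_N$-weighted higher-order terms also controlled), after which one extracts a convergent subsequence $w_N \to w$ in every $C^\ell$ and checks that $w$ solves \eqref{le} and inherits the tame estimate.

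The key steps, in order, are: (1) \emph{Setup and solvability of the regularization.} Verify that for each $N$ the regularized problem is well-posed with smooth solution $w_N$; here compatibility of the initial data with the boundary conditions at the corner $t=0$ uses hypothesis (C) (all time-derivatives of $g$ vanish at $t=0$), which forces all time-derivatives of $w_N$ to vanish at $t=0$ and makes the data compatible to infinite order. (2) \emph{Energy estimates.} Multiply the equation by $w_N$ and integrate over $\Omega_0$; integration by parts on the $a^{ij}(w_N)_{ij}$ term produces a boundary integral proportional to $a^{ij}\nu_i (w_N)_j (w_N) $, which vanishes by (A$_1$), plus bulk terms involving $\p_j a^{ij}$ and $b^i$. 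The boundary term from the $-\ve_N\triangle^N$ piece is nonpositive thanks to the imposed boundary conditions. This yields an $L^2$ bound on $w_N(t)$ in terms of $g$, uniformly in $N$. (3) \emph{Higher-order estimates.} Differentiate the equation in space (tangential and normal derivatives) and in time, and run the same energy argument on the differentiated quantities; this is where (A$_2$) and the Fichera condition (B) enter. Differentiating produces extra boundary terms; (B) ($(b^i-\p_j a^{ij})\nu_i \le 0$) ensures the sign of the dangerous first-order boundary contribution is favorable, while (A$_2$) ($(\p_k a^{ij})\nu_k\nu_i\nu_j<0$) provides a strictly negative coefficient that can absorb lower-order error boundary terms after an interpolation/Young's inequality argument. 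Iterating gives, for each $\ell$, a bound $\|w_N(t)\|_{C^\ell} \le C(\ell)$ with $C(\ell)$ depending only on $n,\ell,T,\Omega_0$ and $C^{\ell+r}$ norms of the coefficients, independent of $N$ — and crucially one must track that the $\ve_N$-weighted norms $\ve_N^{1/2}\|\triangle^{N/2} w_N\|$ etc. are also bounded so they drop out in the limit. (4) \emph{Passage to the limit.} By Arzelà–Ascoli (using the uniform $C^\ell$ bounds for all $\ell$) extract $w_N \to w$ in $C^\infty(\ov\Omega_0\times[0,T])$ along a subsequence; since $\ve_N \to 0$ and $\triangle^N w_N$ stays controlled after the $\ve_N$ weighting, the regularizing term vanishes in the limit and $w$ solves \eqref{le}, with the tame estimate \eqref{tameest} inherited. (5) \emph{Uniqueness.} The difference of two smooth solutions satisfies the homogeneous equation with zero initial data; the $L^2$ energy estimate from step (2) (which needed only (A$_1$) plus smoothness of coefficients) forces it to vanish.

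The main obstacle is step (3): controlling the boundary terms that arise when one differentiates the degenerate equation near $\p\Omega_0$. Unlike the uniformly parabolic case, one cannot simply absorb normal derivatives, and one must carefully organize the hierarchy of estimates so that at each order the "bad" boundary contributions are either killed by the structure conditions (A$_1$), (A$_2$), (B) or absorbed into the strictly negative term furnished by (A$_2$) via interpolation, all with constants uniform in $N$. The dimension $n=1$ case is easier because then (A$_1$) forces $a^{11}$ itself to vanish on the (zero-dimensional) boundary, so the relevant boundary integrals simplify and the weaker transport-type condition (B'') suffices in place of (B); I would handle it as a streamlined version of the same scheme. One should also remark, as the authors note, that replacing (B) by the strict (B$'$) lets one dispense with (A$_2$) entirely, since then the first-order boundary term is already strictly negative and can itself absorb the lower-order errors.
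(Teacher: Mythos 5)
Your outline reproduces the Kohn--Nirenberg regularization scheme that the paper uses, and the broad shape of your energy argument (use (A$_1$) to kill the leading boundary term, (B) for the first-order boundary term, (A$_2$) to furnish a strictly negative term that absorbs errors) is right. But step (3) contains a serious gap. You assert $N$-uniform $C^\ell$ bounds on $w_N$ so that a single Arzel\`a--Ascoli extraction yields a smooth limit. That is not what the energy method gives, and the paper's argument is structured precisely to avoid needing such uniformity. The quantity one actually controls, $I_N$ (which pairs $N$-th tangential derivatives with the $N$-th normal derivative weighted by the distance $-Y$ to the boundary), satisfies a Gronwall inequality whose constant depends on $N$: in the paper's Lemma \ref{klemma} one has $\frac{d}{dt} I_N \le -\frac{\ve}{C} I_{2N} + C I_N + C_N^2(I_r+1)$ with $C = C(N)$. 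Moreover, one cannot run a low-order energy estimate on the $\ve$-regularized equation uniformly in $N$, because pairing a fixed-order derivative of $w_N$ against $\ve L_{2N} w_N$ and integrating by parts is not compatible with the boundary conditions $\partial_\nu^j w_N = 0$, $j = N, \ldots, 2N-1$, unless the order of the test derivative is matched to $N$. The paper therefore takes a double limit: for each \emph{fixed} $N$, send $\ve \to 0$ to get a weak limit with roughly $N/2$ spacetime derivatives in $L^2$, identify it as the unique classical solution via Proposition \ref{propunique}, and only then observe that since $N$ was arbitrary the common limit is $C^\infty$. Your simultaneous limit $\ve_N \to 0$, $N \to \infty$ does not close without a genuinely new, $N$-uniform estimate.

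Two smaller points. First, the regularizer is not $\triangle^N$: the paper constructs $L_{2N}$ from pure normal powers $D_Y^{2N}$ and pure tangential powers $D_X^{2\bal}$ in boundary-flattened coordinates, glued with a partition of unity. This is what lets the $N$-fold integration by parts against the $Y$-variable close cleanly with the imposed conditions on pure normal derivatives and with the weight $-Y$ in $I_N$; for $\triangle^N$ the boundary terms contain mixed derivatives and it is not clear the stated boundary conditions suffice. Second, your uniqueness claim uses only (A$_1$), but that is not enough: the boundary contribution $\int_{\partial\Omega_0}(b^i - \partial_j a^{ij})\nu_i w^2$ from the second integration by parts requires the Fichera condition (B) (or (B$''$) when $n=1$) to have a favorable sign, and the paper's maximum-principle proof of Proposition \ref{propunique} likewise invokes (B) at a boundary maximum.
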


Since $T$ is arbitrary, Theorem \ref{theoremmain} immediately implies Theorem \ref{theoremmainintro}.  Note that the solution $w(x,t)$ of Theorem \ref{theoremmain} satisfies
\begin{equation} \label{wconclusion}
D_t^k w(x,0)=0, \quad \textrm{for all } x \in \ov{\Omega}_0, \ k=0,1,2, \ldots.
\end{equation}

\medskip

First we have the following uniqueness statement.  Similar results can be found in \cite{OR}, for example, but we include here the proof for the sake of completeness.

\begin{proposition} \label{propunique}
Assuming \emph{(A$_1$)}, \emph{(B)} and \emph{(C)}, let $w_1, w_2$ be solutions to (\ref{le}) which are $C^2$ in space and $C^1$ in time.  Then $w_1=w_2$.  

The same is true in dimension $n=1$ if we replace \emph{(B)} by \emph{(B'')}.
\end{proposition}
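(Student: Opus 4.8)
Put $w := w_1 - w_2$. Since the inhomogeneous term $g$ is common to both equations it cancels, so $w$ is $C^2$ in space, $C^1$ in time, satisfies $w(\cdot,0)=0$, and solves the homogeneous equation $\partial_t w = a^{ij}w_{ij} + b^i w_i + fw$ on $\ov{\Omega}_0\times[0,T]$; the goal is $w\equiv 0$. The plan is a maximum principle argument. First replace $w$ by $e^{-\la t}w$ with $\la$ a constant larger than $\sup_{\ov{\Omega}_0\times[0,T]} f$; this preserves everything except that now $f<0$ everywhere, and since the argument applies verbatim to $-w$ it is enough to prove $M:=\max_{\ov{\Omega}_0\times[0,T]} w\le 0$. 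Suppose $M>0$, attained at $(x_0,t_0)$; as $w(\cdot,0)=0$ we have $t_0>0$. If $x_0\in\Omega_0$ then at $(x_0,t_0)$ we have $w_i=0$, $(w_{ij})\le 0$ so $a^{ij}w_{ij}\le 0$ (positive semidefiniteness of $(a^{ij})$), and $w_t\ge 0$; the equation then gives $0\le w_t = a^{ij}w_{ij}+b^iw_i+fw\le fM<0$, a contradiction. So the crux is the boundary case $x_0\in\partial\Omega_0$.

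In that case $x_0$ maximizes $w(\cdot,t_0)$ over $\ov{\Omega}_0$, so the gradient is purely normal, $\nabla w(x_0,t_0)=(\partial_\nu w)\,\nu$ with $\partial_\nu w\ge 0$, and the restriction of $w(\cdot,t_0)$ to $\partial\Omega_0$ has a maximum at $x_0$, hence intrinsic Hessian $\le 0$ there. By (A$_1$), equivalently $a^{ij}\nu_j=0$ on $\partial\Omega_0$, the symmetric matrix $(a^{ij})$ at $x_0$ has every non-kernel eigenvector tangent to $\partial\Omega_0$; writing $a=\sum_\al\mu_\al\, e_\al\otimes e_\al$ with the $e_\al$ orthonormal and tangent and $\mu_\al\ge 0$, and comparing the ambient and intrinsic Hessians of $w(\cdot,t_0)$ via the Gauss-type formula, one gets at $(x_0,t_0)$
$$ a^{ij}w_{ij} = \sum_\al \mu_\al\, w_{ij}e_\al^i e_\al^j \le (\partial_\nu w)\sum_\al\mu_\al\kappa_\al, \qquad b^iw_i = (\partial_\nu w)\,b^i\nu_i, $$
where $\kappa_\al=\langle\nabla_{e_\al}\nu,e_\al\rangle$ are the principal curvatures of $\partial\Omega_0$ at $x_0$.

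The key step is a geometric identity recognizing $\sum_\al\mu_\al\kappa_\al = a^{ij}\partial_j\nu_i$ in terms of the Fichera quantities. Extending $\nu$ to a smooth unit field near $\partial\Omega_0$, the vector field $W_i:=a^{ij}\nu_j$ vanishes on $\partial\Omega_0$ by (A$_1$); hence its tangential derivatives there vanish, so $\partial_i W_i = \nu_i\partial_\nu W_i = \partial_\nu(a^{ij}\nu_i\nu_j)$, while directly $\partial_i W_i = (\partial_j a^{ij})\nu_i + a^{ij}\partial_j\nu_i$. Since $a^{ij}\nu_i\nu_j$ vanishes identically on $\partial\Omega_0$, comparing these gives
$$ \sum_\al\mu_\al\kappa_\al \;=\; (\partial_k a^{ij})\nu_k\nu_i\nu_j \;-\; (\partial_j a^{ij})\nu_i \qquad \textrm{on } \partial\Omega_0. $$
Combining with the previous display,
$$ a^{ij}w_{ij} + b^iw_i \;\le\; (\partial_\nu w)\Big[(\partial_k a^{ij})\nu_k\nu_i\nu_j + (b^i-\partial_j a^{ij})\nu_i\Big] \;\le\; 0, $$
because $(\partial_k a^{ij})\nu_k\nu_i\nu_j\le 0$ (since $a^{ij}\nu_i\nu_j\ge 0$ in $\Omega_0$ and vanishes on $\partial\Omega_0$ — the observation recorded after (A$_2$)), $(b^i-\partial_j a^{ij})\nu_i\le 0$ is exactly (B), and $\partial_\nu w\ge 0$. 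The equation then yields $0\le w_t = a^{ij}w_{ij}+b^iw_i+fw\le fM<0$, the same contradiction; hence $M\le 0$, and applying this to $-w$ gives $w\equiv 0$.

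When $n=1$ the boundary case degenerates pleasantly: there are no tangential directions and $a^{11}(x_0)=0$ by (A$_1$), so the bracket above collapses to $(\partial_k a^{ij})\nu_k\nu_i\nu_j+(b^i-\partial_j a^{ij})\nu_i = b^1\nu_1$ (using $\nu_1^3=\nu_1$), which is $\le 0$ precisely under (B''); thus the same proof runs with (B'') in place of (B). I expect the only real obstacle to be the boundary-point analysis — in particular checking the geometric identity for $\sum_\al\mu_\al\kappa_\al$ and seeing that (B), resp.\ (B'') when $n=1$, is exactly what forces the boundary contribution to be nonpositive. (For $n\ge 2$ one can bypass the geometry with an energy estimate: multiplying the homogeneous equation by $w$, integrating over $\Omega_0$ and integrating by parts, the boundary terms either vanish by (A$_1$) or equal $\tfrac12\int_{\partial\Omega_0}(b^i-\partial_j a^{ij})\nu_i\,w^2\le 0$ by (B), leaving $\tfrac{d}{dt}\int_{\Omega_0}w^2\le C\int_{\Omega_0}w^2$, after which Gr\"onwall and $\int_{\Omega_0}w^2(\cdot,0)=0$ finish it; but this needs (B) rather than (B''), so the $n=1$ statement still requires the maximum principle.)
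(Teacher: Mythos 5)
Your proof is correct, and it follows the same broad strategy as the paper's (a parabolic maximum principle, with the real work at a boundary maximum point), but the way you derive the boundary inequality is genuinely different and, I would say, cleaner. The paper flattens $\partial\Omega_0$ as a graph $x_n=\gamma(x')$, differentiates $a^{ij}\nu_i=0$ tangentially, and after some bookkeeping arrives at $b^n-\sum_{i,j<n}a^{ij}\gamma_{ij}\le 0$ (their inequality (2.3)), which is then combined with the tangential second-derivative test $(v_{ij}+v_n\gamma_{ij})\le 0$. You instead extend $\nu$ to a unit field, set $W_i=a^{ij}\nu_j$, and exploit that $W$ and $a^{ij}\nu_i\nu_j$ both vanish on $\partial\Omega_0$ to get the coordinate-free identity $\sum_\alpha\mu_\alpha\kappa_\alpha=(\partial_k a^{ij})\nu_k\nu_i\nu_j-(\partial_j a^{ij})\nu_i$, making the role of the two boundary quantities $(\partial_k a^{ij})\nu_k\nu_i\nu_j\le 0$ and the Fichera quantity $(b^i-\partial_j a^{ij})\nu_i\le 0$ visible at a glance; in graph coordinates both sides of your identity reduce to $b^n-\sum a^{ij}\gamma_{ij}$, so the two arguments agree. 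Your replacement of the paper's $-\delta t$ barrier by the strict choice $\lambda>\sup f$ is fine. The energy-method aside is also correct (and is in the spirit of the paper's proof of Lemma 2.6, see the boundary term in (2.9)); you are right that it yields uniqueness under (B) but not under (B'') for $n=1$, which is why the maximum principle is still needed there.
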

\begin{proof}
Define $\tilde{w}:=w_1-w_2$, which satisfies,
\begin{equation} \label{le2}
\begin{split}
\frac{\partial \tilde{w}}{\partial t} =  {} & a^{ij} \tilde{w}_{ij} + b^i \tilde{w}_i + f\tilde{w}, \quad \textrm{on } \ov{\Omega}_0 \times [0,T] \\
 \quad \tilde{w}(x,0)={}& 0, \quad \textrm{for } x\in \ov{\Omega}_0.
 \end{split}
\end{equation}
Set $v= e^{-Ct}\tilde{w} -\delta t$ for $C = \max_{\ov{\Omega}_0 \times [0,T]} f$ and a small constant $\delta>0$.  
Suppose for a contradiction that $v$ achieves a positive maximum on $\ov{\Omega}_0 \times [0,t_0]$ at the point $(x_0, t_0)$ with $t_0>0$.  Then if $x_0$ is an interior point of $\Omega_0$ we have $(v_{ij}) \le 0$ and $v_i=0$ at that point, giving
$$0 \le \frac{\partial v}{\partial t} = a^{ij} v_{ij} + b^i v_i + (f-C)(v+\delta t) - \delta \le -\delta <0,$$
a contradiction.

Now suppose that $x_0$ is a point on the boundary $\partial \Omega_0$.  Suppose that $\partial \Omega_0$ near $x_0$ can be written as a graph $x_n = \gamma(x')$  with $\gamma(0)=0$ so that  $x_0$ represents the origin, writing $x'=(x_1, \ldots, x_{n-1})$.  We may assume that $D_{x'} \gamma(0)=0$ and the unit normal at $x_0$ is $(0,\ldots, 0,1)$. 
For $j=1, \ldots, n-1$, we apply $\partial_j$ to $a^{ij} \nu_i=0$ to obtain at $0$,
$$\sum_{i,j=1}^{n-1} a^{ij} \nu_{ij}  + \sum_{j=1}^{n-1} \partial_j a^{nj}=0,$$
where we regard $\nu_i$ as a function of $x_1, \ldots, x_{n-1}$, given by
$$\nu_i = - \frac{1}{\sqrt{1+|D_{x'} \gamma|^2}} \gamma_i, \quad \textrm{for } i=1, \ldots, n-1.$$
Hence, at the origin,
$$0 \ge (b^i - \partial_j a^{ij} )\nu_i = b^n - \partial_n a^{nn} + \sum_{i,j=1}^{n-1} a^{ij} \nu_{ij} = b^n - \partial_n a^{nn} - \sum_{i,j=1}^{n-1} a^{ij} \gamma_{ij},$$
and since $\partial_n a^{nn} \le 0$ we have
\begin{equation}\label{crucial}
b^n - \sum_{i,j=1}^{n-1} a^{ij} \gamma_{ij} \le 0.
\end{equation}

Since
 $v(x_0)$ is a local maximum for $x \in \partial \Omega_0$, applying $\partial_i$, $\partial_j$ to  $x' \mapsto v(x', \gamma(x'))$ for $i,j=1, \ldots, n-1$,
 $$0 \ge (v_{ij} + v_n \gamma_{ij}), \quad \textrm{at } x_0,$$
 as $(n-1)\times (n-1)$ matrices. Hence $$\sum\limits_{i,j=1}^{n}a^{ij}v_{ij} + b^i v_i \le \sum\limits_{i,j=1}^{n-1} - a^{ij} \gamma_{ij} v_n  + b^n v_n \le 0,$$
 using (\ref{crucial}), the fact that $a^{in}=0$ for all $i$ by the remark under (A$_1$), and the fact that $v_n \ge 0$ and $v_1=\ldots  =v_{n-1}=0$ at $x_0$.
Then
$$0 \le \frac{\partial v}{\partial t} = a^{ij} v_{ij} + b^i v_i + (f-C)(v+\delta t) - \delta \le -\delta <0,$$
a contradiction.  This proves that $v \le 0$ on $\ov{\Omega}_0 \times [0,T]$.  Letting $\delta$ tend to zero gives $\tilde{w} \le 0$.  The proof of  $\tilde{w} \ge 0$ is similar.
\end{proof}

\begin{remark}
An easier argument than the one above shows that the same uniqueness statement holds if (A$_1$) is replaced by the assumption that all the components $a^{ij}$ vanish on $\partial \Omega_0$ and (B)
is replaced by (B'') (cf. \cite{DH98}).  This includes the case of $n=1$, with (B) replaced by (B'').
\end{remark}

To prove Theorem \ref{theoremmain}, we regularize our equation using a trick of 
Kohn-Nirenberg \cite{KN}.  Fix a large integer $N$ and regularize with an elliptic term of order $2N$ which we will denote by $L_{2N}$.  For convenience, we  assume that $N$ is an \emph{even} integer.  Let $\ve>0$ be small, and consider the following parabolic equation for $w(x,t)$:
\begin{equation} \label{ler}
\begin{split}
\frac{\partial w}{\partial t} =  {} & -\ve L_{2N} w+ a^{ij} w_{ij} + b^i w_i + fw + g, \quad \textrm{on } \ov{\Omega}_0 \times [0,T] \\
 \quad w(x,0)={}& 0, \quad \textrm{for } x\in \ov{\Omega}_0\\
 \partial_{\nu}^{j} w(x, t)= {} & 0, \quad \textrm{for } (x, t) \in \partial \Omega_0 \times [0,T], \quad j=N, N+1, \ldots, 2N-1.
 \end{split}
\end{equation}

We now describe the operator $L_{2N}$, which we will define locally and then patch together.  We cover $\ov{\Omega}_0$ with a finite number of relatively open charts $\{ U_{\lambda} \}_{\lambda=0,1, \ldots, K}$ such that  each $U_{\lambda}$, for $\lambda \ge 1$, contains a piece of the boundary $\partial \Omega_0$ and their union consists of points in $\ov{\Omega}_0$ at most a distance $c_0>0$ from the boundary, where we may shrink $c_0>0$ if necessary.  

Next, for $c>0$, we write $\Omega_c$ to be the set of points of $\Omega_0$ whose distance from the boundary is strictly greater than $c$.  Define $U_0 = \Omega_{c_0/4}$.

As in \cite[Section 3.2]{KN} we choose special coordinates $(X_1^{\lambda},\ldots, X_{n-1}^{\lambda}, Y^{\lambda}) \in Q_{n-1} \times (-c_0, 0]$ on each $U_{\lambda}$ for $\lambda =1, \ldots, K$ such that $-Y^{\lambda}$ equals the distance from the boundary.  On the surfaces $Y^{\lambda}=\textrm{constant}$ we use $X_1^{\lambda}, \ldots, X_{n-1}^{\lambda}$ as local coordinates.  Here we are writing $$Q_{n-1} := (-1,1)^{n-1}.$$
The boundary 
$\partial \Omega_0 \cap U_{\lambda}$ corresponds to the points  with $Y^{\lambda}=0$.  The outward normal $\nu$ along the boundary in these coordinates is the unit vector $(0, \ldots, 0, 1)$.  Denote by $\Phi_{\lambda}$ the diffeomorphisms from $U_{\lambda}$ to  $Q_{n-1} \times (-c_0,0]$.

Let $\{ \xi_{\lambda} \}_{\lambda=0}^K$ be a partition of unity subordinate to this covering.  Namely, we have smooth functions $\xi_{\lambda}: U_{\lambda} \rightarrow [0,1]$  supported compactly in these subsets (which allows $\xi_{\lambda}$ to be nonzero for some of the boundary part $Y^{\lambda}=0$) and so that
$$\sum_{\lambda=0}^K \xi_{\lambda}=1,$$
on $\ov{\Omega}_0$.  We also assume that
\begin{equation} \label{DYx}
D_{Y^{\lambda}} \xi_{\lambda} =0, \quad \textrm{for } |Y^{\lambda}| \le c_0/2, \quad \lambda =1, \ldots, K.
\end{equation}

Define $L_{2N}$ by
\begin{equation} \label{L2N}
\begin{split}
L_{2N} w := {} & \sum_{\lambda=1}^K \xi^2_{\lambda} \left(D^{2N}_{Y^{\lambda}} w +  \sum_{|\bal | = N}  D^{2\bal}_{X^{\lambda}} w    \right)  + \xi_0^2 \sum_{|\mathbf{a}|=N} D_{x}^{2\mathbf{a}} w
\end{split}
\end{equation}
where for the sake of simplicity we are suppressing the diffeomorphisms mapping $Q_{n-1} \times (-c_0,1]$ to $U_{\lambda}$.  Here $\bal=(\al_1, \ldots, \al_{n-1})$ and $2\bal=(2\al_1, \ldots, 2\al_{n-1})$ are multi-indices with $n-1$ components, while $\mathbf{a}$ and $2\mathbf{a}$ have $n$ components.

\begin{lemma}
There exists a unique smooth solution $w=w(x,t)$ to (\ref{ler}) on $\ov{\Omega}_0\times [0,T]$.
\end{lemma}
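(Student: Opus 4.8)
The plan is to treat \eqref{ler}, for fixed $\ve>0$ and fixed (even) $N$, as a standard linear parabolic initial-boundary value problem of order $2N$ with linear boundary conditions, and to invoke the classical existence-uniqueness theory (e.g.\ that of Agmon-Douglis-Nirenberg / Eidelman / Friedman for higher-order parabolic systems). Concretely, the principal part is $-\ve L_{2N}$ plus the second-order operator $a^{ij}\partial_i\partial_j$, but since $2N > 2$ (recall $N$ is a large even integer, so $2N\ge 4$) the lower-order operator $a^{ij}w_{ij}+b^iw_i+fw$ does not affect the principal symbol; the highest-order behaviour is governed entirely by $-\ve L_{2N}$.

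First I would verify that $-\ve L_{2N}$ is (properly) elliptic of order $2N$ with the correct sign. From \eqref{L2N}, in each boundary chart the top-order part is $\ve\,\xi_\lambda^2\big(D_{Y^\lambda}^{2N} + \sum_{|\bal|=N}D_{X^\lambda}^{2\bal}\big)$, whose symbol is $\ve\,\xi_\lambda^2\big(\eta^{2N} + \sum_{|\bal|=N}\zeta^{2\bal}\big) \ge 0$ and vanishes only at the zero covector (using that $N$ is even, so each $\eta^{2N}$ and $\zeta_k^{2N}$ is a genuine even power); summing against the partition of unity $\sum\xi_\lambda^2$, which is strictly positive on a neighbourhood of $\partial\Omega_0$ — here I would need the charts chosen so that $\sum_\lambda \xi_\lambda^2>0$ wherever $L_{2N}$ has support, possibly shrinking $c_0$ — gives ellipticity near the boundary; near the interior $U_0$ the $\xi_0^2\sum_{|\mathbf a|=N}D_x^{2\mathbf a}$ term is used, together with the honest ellipticity $(a^{ij})>0$ of the lower-order part on compact subsets of $\Omega_0$. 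Thus $\partial_t + \ve L_{2N} - a^{ij}\partial_i\partial_j - b^i\partial_i - f$ is uniformly parabolic of order $2N$ on $\ov\Omega_0\times[0,T]$.

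Next I would check that the $N$ boundary conditions $\partial_\nu^j w = 0$ for $j=N,\dots,2N-1$ form a set of operators of distinct orders $N,\dots,2N-1$, all of order $<2N$, hence are normal, and that together with the ellipticity of $-\ve L_{2N}$ they satisfy the Lopatinskii-Shapiro (complementing) condition: in the chart coordinates $-Y^\lambda=\operatorname{dist}(\cdot,\partial\Omega_0)$, freezing coefficients at a boundary point and taking the half-line ODE $\ve\big((-1)^N\tfrac{d^{2N}}{ds^{2N}} + |\zeta|^{2N}\big)\phi = \la\phi$ with $\mathrm{Re}\,\la\le 0$ and $(\zeta,\la)\ne 0$, the bounded solutions on $s\ge 0$ span an $N$-dimensional space, and the conditions $\phi^{(j)}(0)=0$ for $j=N,\dots,2N-1$ force $\phi\equiv 0$ — this last point is the one genuine computation, and it works because the $N$ lowest-order derivative functionals $\phi^{(N)},\dots,\phi^{(2N-1)}$ are linearly independent on the decaying-solution subspace (equivalently, the associated Wronskian-type determinant is nonzero). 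One also checks compatibility conditions of the data at $t=0$: since $w(x,0)=0$ and, by (C), all $D_t^k g(x,0)=0$, every order of compatibility is trivially satisfied, so the solution is smooth up to $t=0$ and in fact $D_t^k w(x,0)=0$ for all $k$.

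With ellipticity, normality, the complementing condition, and the compatibility conditions in hand, the standard parabolic theory yields a unique solution $w\in C^\infty(\ov\Omega_0\times[0,T])$ (bootstrapping the $L^p$ or Schauder a priori estimates to all orders, using smoothness of all coefficients and of $\partial\Omega_0$). Uniqueness also follows directly from the same theory, or alternatively from a simple energy estimate exploiting the sign of $-\ve L_{2N}$ and the vanishing boundary conditions. I expect the main obstacle to be the verification of the Lopatinskii-Shapiro condition for the boundary operator $\{\partial_\nu^j\}_{j=N}^{2N-1}$ paired with $-\ve L_{2N}$ — in particular confirming that these specific high-order normal derivatives complement this particular (rather degenerate-looking, since it is a pure sum of even pure powers) elliptic operator — but this is essentially the content of the corresponding step in Kohn-Nirenberg \cite[Section 3.2]{KN}, which I would follow closely, the only difference being the presence of the harmless lower-order term $a^{ij}w_{ij}+b^iw_i+fw$ and the time variable.
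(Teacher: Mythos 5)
Your proposal is correct and takes essentially the same approach as the paper: both treat \eqref{ler} as a standard higher-order parabolic initial-boundary value problem and appeal to the classical existence-uniqueness theory, the paper citing Friedman's Theorem II.10.3 and the root-counting criterion on pp.~76--77 of \cite{F}, which is the same condition you are verifying via the half-line ODE (Lopatinskii-Shapiro). One small inaccuracy in your write-up: you invoke the positivity $(a^{ij})>0$ on compact subsets of $\Omega_0$ to help establish ellipticity near the interior, but since the second-order term $a^{ij}\partial_i\partial_j$ has strictly lower order than $-\ve L_{2N}$ it does not contribute to the principal symbol at all; ellipticity everywhere follows purely from $\sum_\lambda \xi_\lambda^2 \ge (K+1)^{-1}>0$ (by Cauchy-Schwarz applied to $\sum\xi_\lambda=1$) together with the positive-definiteness of each local symbol. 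This does not affect the validity of the argument.
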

\begin{proof}
This holds by standard theory for higher order parabolic equations.  Indeed we may apply \cite[Theorem II.10.3]{F}.  Here we use the version of this result for general boundary conditions (see  \cite[p. 142]{F}).
Write $A_0$ for the principal symbol of $L_{2N}$.  At $x \in \partial \Omega_0$, the polynomials $p(z) = A_0 (x,\xi + z \nu) - re^{i\theta}$ have precisely $N$ roots with positive imaginary parts, where $r>0$ and $\theta \in [\pi/2, 3\pi/2]$ are fixed constants, $\nu$ is the outer unit normal, and $\xi$ is a real nonzero vector in the tangent hyperplane to $\partial \Omega_0$ at $x_0$.  By the discussion in \cite[p. 76-77]{F}, this is sufficient to apply \cite[Theorem II.10.3]{F} in our setting.  \end{proof}

In the following we will make use of this lemma.

\begin{lemma}  \label{lemmagarding} Assume that $q$ is a positive integer.
Let $u$ be a smooth real-valued function on $Q_{n-1} \times (-c_0,0]$ supported on a compact set in $Q_{n-1} \times (-c_0,0]$.  
Then for every $\delta>0$ there is a constant $C_{\delta}$ such that 
\begin{equation} \label{garding}
\begin{split}
\lefteqn{\int_{Q_{n-1}} \int_{-c_0}^0 \sum_{k=0}^{q-1} \sum_{|\bal | =q - k} | D_Y^k D_X^{\bal}u|^2dYdX  } \\ \le {} &  \delta  \int_{Q_{n-1}} \int_{-c_0}^0 |D_Y^q u|^2 dYdX + C_{\delta}\int_{Q_{n-1}} \int_{-c_0}^0 \sum_{|\bal|=q} |D^{\bal}_X u|^2 dYdX,
\end{split}
\end{equation}
where we use coordinates $X=(X_1, \ldots, X_{n-1})$ on $Q_{n-1}$ and $Y$ on $(-c_0,0]$.  
In addition, for any $k$,
\begin{equation} \label{knY}
\int_{Q_{n-1}} \int_{-c_0}^0 | D_Y^k u|^2dYdX \le C \int_{Q_{n-1}} \int_{-c_0}^0 (-Y)^2 | D_Y^{k+1} u|^2dYdX.
\end{equation}
\end{lemma}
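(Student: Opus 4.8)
\textbf{Proof plan for Lemma \ref{lemmagarding}.}

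The plan is to prove the two inequalities by relatively standard integration-by-parts arguments, treating each separately. For \eqref{garding}, I would first reduce to proving, for each fixed pair $(k,\bal)$ with $0 \le k \le q-1$ and $|\bal| = q-k$, the single-term estimate
\begin{equation*}
\int | D_Y^k D_X^{\bal} u|^2 \, dY dX \le \delta \int |D_Y^q u|^2 \, dY dX + C_\delta \int \sum_{|\bbe|=q} |D_X^{\bbe} u|^2 \, dY dX,
\end{equation*}
since the full estimate follows by summing over the finitely many such pairs (absorbing the resulting constant into $\delta$ and $C_\delta$). The natural approach is interpolation: think of $D_X^{\bal}$ as fixed and interpolate in the $Y$-variable between the $k=0$ derivative (controlled by the tangential term on the right, after commuting $D_X^{\bal}$ past nothing since $X$ and $Y$ derivatives commute) and the $k=q$ derivative. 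Concretely, writing $h = D_X^{\bal} u$ (a function with $q-k$ fewer available $X$-derivatives but which still has enough regularity), one integrates by parts in $Y$ in $\int |D_Y^k h|^2$ to bound it by $\bigl(\int |D_Y^q \tilde u|^2\bigr)^{k/q}\bigl(\int |D_X^{\text{(top)}} u|^2\bigr)^{1-k/q}$-type quantities, then applies Young's inequality $ab \le \delta a^p + C_\delta b^{p'}$ to split off the $\delta$ factor. The compact support of $u$ in $Q_{n-1} \times (-c_0,0]$ is what makes the integration by parts boundary-term-free in the $X$-directions; in the $Y$-direction the support is compact in $(-c_0,0]$, so there may be a boundary contribution at $Y=0$, but since the estimate only involves even powers of $L^2$-norms this can be handled by the usual trick of pairing $D_Y^k h$ against $D_Y^{k-1}h$ and $D_Y^{k+1}h$ and noting the boundary term at $Y=0$ has a favorable sign or can be absorbed. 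Actually the cleanest route is to extend $u$ by reflection or simply to note that all the integrations by parts in $Y$ needed here produce boundary terms that telescope and can be controlled; I would organize this so that only interior integrations by parts in $X$ are used for the delicate part.

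For \eqref{knY}, the idea is a one-dimensional Hardy-type inequality in the $Y$-variable (with $X$ as a parameter). Since $u$ has compact support in $Q_{n-1} \times (-c_0, 0]$, for each fixed $X$ the function $Y \mapsto D_Y^k u(X,Y)$ vanishes for $Y$ near $-c_0$, so we may write $D_Y^k u(X,Y) = \int_{-c_0}^Y D_Y^{k+1}u(X,s)\,ds$. The weight $(-Y)^2$ on the right suggests pairing this with the standard Hardy inequality $\int_{-c_0}^0 |F(Y)|^2\,dY \le C \int_{-c_0}^0 (-Y)^2 |F'(Y)|^2\,dY$ valid for $F$ vanishing at $Y=-c_0$ — but note this requires $F$ to vanish at the \emph{finite} endpoint $Y=-c_0$, which is exactly where $u$ has compact support, so Hardy applies with the roles arranged so the weight vanishes away from the support boundary. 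One subtlety: the classical Hardy inequality has the weight vanishing at the point where the function vanishes; here $(-Y)^2$ vanishes at $Y=0$, not at $Y=-c_0$. So the correct statement to invoke is the version where $F(-c_0)=0$ and we bound $\int |F|^2$ by the weight $(Y+c_0)^2$ — but since $-c_0 \le Y \le 0$ we have $(Y+c_0)^2 \le c_0^2$ while $(-Y)^2$ can be as small as $0$, so these weights are \emph{not} comparable and one cannot naively substitute. The resolution: near $Y=0$ the weight $(-Y)^2$ degenerates, so \eqref{knY} is genuinely a Hardy inequality with the weight vanishing at $Y=0$, which forces $D_Y^k u$ to vanish at $Y=0$ as well — but that is \emph{not} assumed. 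Hence I expect \eqref{knY} is intended to be applied in a context (or with a reflection) where this holds, or — more likely — it should be read with the understanding that one first integrates by parts to move a derivative, or that the compact support is in the \emph{open} set so that $u$ and all its derivatives vanish on $\partial Q_{n-1}\times(-c_0,0]$ but the relevant vanishing at $Y=0$ comes from elsewhere. I would re-examine this: the honest proof is that $\int_{-c_0}^0 |D_Y^k u|^2\,dY = -\int_{-c_0}^0 \partial_Y\bigl[(Y-0)\bigr]|D_Y^k u|^2\,dY$ is the wrong sign, so instead use $\int_{-c_0}^0 |D_Y^k u|^2 \, dY = \int_{-c_0}^0 \partial_Y(Y+c_0)|D_Y^ku|^2\,dY$ and integrate by parts: the boundary term at $Y=0$ is $c_0 |D_Y^k u(X,0)|^2 \ge 0$ with the \emph{wrong} sign for an upper bound.

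Given the sign difficulty just identified, I expect \textbf{the main obstacle is \eqref{knY}}, and the correct argument must use that $u$ is supported compactly in $Q_{n-1}\times(-c_0,0]$ in a way that includes vanishing to infinite order at $Y=0$ is \emph{not} available — so instead the intended reading must be that $u$'s support avoids a neighborhood of $Y = 0$ as well, OR (most plausibly) the inequality is applied after the partition-of-unity cutoffs $\xi_\lambda$ and the structure \eqref{DYx} have been used, making $u$ effectively independent of $Y$ near $Y=0$, in which case $D_Y^k u \equiv 0$ there for $k \ge 1$ and the Hardy inequality $\int_{-c_0}^{-c_0/2} |D_Y^k u|^2 \le C \int_{-c_0}^{-c_0/2}(-Y)^2|D_Y^{k+1}u|^2$ (where now $(-Y)^2 \ge c_0^2/4$ is bounded below, so it reduces to the trivial $\int|D_Y^k u|^2 \le C\int |D_Y^{k+1}u|^2$ for functions vanishing at $Y=-c_0$, which is elementary) suffices. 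So my plan for \eqref{knY}: invoke that the relevant $u$ vanishes for $Y$ near $0$, reducing to a compact-$Y$-interval one-dimensional Poincaré/Hardy inequality proved by Cauchy–Schwarz on $D_Y^k u(X,Y) = \int_{-c_0}^Y D_Y^{k+1}u\,ds$, and for \eqref{garding} carry out the interpolation-plus-Young argument sketched above, being careful that all integrations by parts used in the $Y$-variable are of the self-pairing type whose boundary terms at $Y=0$ either cancel or have the right sign. The one genuinely delicate point — and the step I'd flag as the crux — is managing these $Y=0$ boundary contributions in \eqref{garding} without any boundary hypothesis on $u$ at $Y=0$; I would resolve it by only ever integrating by parts in $X$ for the $\delta$-dependent estimate and treating the $Y$-derivatives by a direct interpolation inequality on the interval $(-c_0,0]$ that requires no endpoint conditions (e.g. the Gagliardo–Nirenberg inequality $\|D_Y^k h\|_{L^2(I)}^2 \le \delta\|D_Y^q h\|_{L^2(I)}^2 + C_\delta\|h\|_{L^2(I)}^2$ on a bounded interval $I$, which holds for all $h \in H^q(I)$ with no boundary conditions), applied with $h = D_X^{\bal}u(X,\cdot)$ and then integrated in $X$.
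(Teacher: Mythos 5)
The paper does not prove this lemma directly; it simply cites \cite[Lemma 2.4]{MN} for \eqref{garding} and \cite[Lemma 5.2]{KN} for \eqref{knY}. So your attempt to give a self-contained proof is a genuinely different route. Your plan for \eqref{garding} is essentially correct in spirit: the cleanest way to carry it out is to Fourier transform in $X$ (valid by the compact support in $Q_{n-1}$), use the one-dimensional interpolation inequality on the bounded $Y$-interval (which requires no boundary conditions, as you note), multiply by $|\xi|^{2(q-k)}$, and apply Young's inequality with exponents $q/k$ and $q/(q-k)$. Your description of "interpolating $D_Y^k h$ against $D_Y^q h$ and $h$ for $h = D_X^{\bal}u$" is slightly off — that would place $q+|\bal|>q$ derivatives on the first term — but the intended interpolation between $D_Y^q u$ and the pure top-order $X$-derivatives of $u$ is the right shape, and the Fourier formulation makes this precise.

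Your analysis of \eqref{knY}, however, contains a genuine error: the "sign difficulty" you identify is an artifact of a poor choice of primitive, and the inequality does \emph{not} require $u$ to vanish near $Y=0$. Write $F = D_Y^k u(X,\cdot)$; then
\begin{equation*}
\int_{-c_0}^0 F^2\,dY = \int_{-c_0}^0 F^2\,\partial_Y(Y)\,dY = \bigl[Y F^2\bigr]_{-c_0}^{0} - 2\int_{-c_0}^0 Y\, F\, F'\,dY.
\end{equation*}
The boundary term at $Y=0$ vanishes because of the explicit factor of $Y$ (not because $F$ vanishes there), and the term at $Y=-c_0$ vanishes because $u$ is supported compactly in $(-c_0,0]$. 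Cauchy--Schwarz then gives $\int F^2 \le 2\bigl(\int F^2\bigr)^{1/2}\bigl(\int Y^2 (F')^2\bigr)^{1/2}$, hence $\int F^2 \le 4\int (-Y)^2 (F')^2$, which is precisely \eqref{knY} after integrating in $X$. You considered only the primitives $(Y-0)$ with a spurious minus sign in front (note $\int F^2 = +\int \partial_Y(Y)F^2$, not $-\int\partial_Y(Y)F^2$) and $(Y+c_0)$, which indeed produces the unwanted boundary term $c_0 F(0)^2$; had you taken the primitive $Y$ itself, the $Y=0$ boundary term would have vanished automatically. Consequently, your proposed resolution — invoking \eqref{DYx} or assuming $u$ vanishes near $Y=0$ — is both unnecessary and incorrect as a reading of the lemma, which is stated for general $u$ compactly supported in the half-open cylinder $Q_{n-1}\times(-c_0,0]$ and is used in the paper precisely in contexts where $u$ does not vanish at $Y=0$.
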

\begin{proof}
The inequality (\ref{garding}) is proved in \cite[Lemma 2.4]{MN}.  For (\ref{knY}), see \cite[Lemma 5.2]{KN}.
\end{proof}

We will also use the following basic results.  We define as usual the Sobolev norm
$$\| u \|^2_{H^q} := \sum_{|\mathbf{a}| \le q} \int_{\Omega_0} |D^{\mathbf{a}} u|^2 dx,$$
 and as above denote by 
$\| u \|_{C^r}$ the $C^r(\ov{\Omega}_0)$ norm of $u$.

\begin{theorem} \label{thmbasic} Let $u, v$ be in  $C^{\infty}(\ov{\Omega}_0)$ and let $q$ be a positive integer.
\begin{enumerate}
\item[(i)] (Sobolev Embedding) If $0 \le r< q-n/2$ then 
$$\| u \|_{C^r} \le C \| u \|_{H^q}.$$
\item[(ii)] For every $\delta>0$ there exists $C_{\delta}$ depending only on $n$, $\Omega_0$ and $q$ such that 
$$\| u \|_{H^{q-1}}^2  \le C \left( \int_{\Omega_0}u^2 +  \int_{\Omega_0} \sum_{|\mathbf{a}|=q-1} |D^{\mathbf{a}}u|^2 \right)  \le C_{\delta}  \int_{\Omega_0}u^2 + \delta \int_{\Omega_0} \sum_{|\mathbf{a}|=q} |D^{\mathbf{a}}u|^2.$$
\item[(iii)] For $0 \le \ell \le k < q-\ell$,
$$\| u\|_{C^k} \| v\|_{C^{q-k}} \le C ( \| u \|_{C^\ell} \| v\|_{C^{q-\ell}} + \| u\|_{C^{q-\ell}} \| v \|_{C^{\ell}}).$$
\end{enumerate}
\end{theorem}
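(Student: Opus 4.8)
The plan is to prove Theorem~\ref{thmbasic} by treating each of its three parts as a standard result in the theory of Sobolev spaces on a bounded smooth domain, and then pointing to the relevant classical statements.

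For part (i), I would invoke the Sobolev embedding theorem for the bounded domain $\Omega_0$ with smooth boundary: since $\p \Omega_0$ is smooth, $\Omega_0$ has (for instance) the cone property or is an extension domain, and the classical embedding $H^q(\Omega_0) \hookrightarrow C^r(\ov{\Omega}_0)$ holds whenever $r < q - n/2$. The only thing to check is that $C^\infty(\ov{\Omega}_0)$ functions are covered, which is immediate. One can cite a standard reference such as Adams' \emph{Sobolev Spaces} or Gilbarg--Trudinger.

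For part (ii), the first inequality is an instance of the interpolation inequality for intermediate derivatives (Gagliardo--Nirenberg, or the Ehrling-type lemma on a smooth bounded domain): the $H^{q-1}$ norm is controlled by the $L^2$ norm together with the top-order ($|\mathbf{a}| = q-1$) seminorm. The second inequality is the Ehrling/Peetre lemma: for $\delta > 0$ one absorbs the top-order-$(q-1)$ term into $\delta$ times the top-order-$q$ term at the cost of a large multiple of the $L^2$ norm, using that the embedding $H^q(\Omega_0) \hookrightarrow H^{q-1}(\Omega_0)$ is compact while $H^{q-1} \hookrightarrow L^2$ is continuous. I would state this as a consequence of those standard facts, noting that the constant $C_\delta$ depends only on $n$, $\Omega_0$ and $q$, as claimed.

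Part (iii) is the Moser-type (convexity) product estimate for $C^k$ norms: for $0 \le \ell \le k < q - \ell$ one has $\|u\|_{C^k}\|v\|_{C^{q-k}} \le C(\|u\|_{C^\ell}\|v\|_{C^{q-\ell}} + \|u\|_{C^{q-\ell}}\|v\|_{C^\ell})$. This follows from the logarithmic convexity of H\"older norms, $\|u\|_{C^k} \le C \|u\|_{C^\ell}^{\theta}\|u\|_{C^{q-\ell}}^{1-\theta}$ with $k = \theta \ell + (1-\theta)(q-\ell)$, combined with Young's inequality $ab \le a^p/p + b^{p'}/p'$; one applies this to both factors with conjugate exponents chosen so that the $u$-powers and $v$-powers recombine into the two terms on the right. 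I expect the main (though still routine) obstacle to be bookkeeping the interpolation exponents in part (iii) and confirming the convexity inequality is available in the stated form on $\ov{\Omega}_0$; this can be cited from Hamilton's survey on the Nash--Moser theorem (where exactly such tame estimates are developed) or proved directly by the above two-line argument. None of the three parts requires anything beyond classical Sobolev/H\"older space theory on a smooth bounded domain, so the ``proof'' is essentially a collection of references with brief indications.
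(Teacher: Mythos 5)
Your proposal is correct and is essentially the same approach the paper takes: all three parts are treated as classical facts and discharged by citation. The paper cites Nirenberg for the Gagliardo--Nirenberg interpolation inequality in (ii) and Hamilton's Nash--Moser survey (Corollary 2.2.2) for (iii), exactly as you suggest; your two-line derivation of (iii) from logarithmic convexity of $C^k$ norms plus Young's inequality is a sound unpacking of that cited corollary.
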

\begin{proof} Part (i) is completely standard.  Part (ii) is a consequence of the Gagliardo-Nirenberg interpolation inequalities (see \cite[p. 125-126]{N}).
For (iii), see \cite[Corollary 2.2.2]{H}.
\end{proof}

\medskip

Define for a positive integer $k$,
\[
\begin{split}
I_k(t) = {} & \int_{\Omega_0} w^2 dx+\sum_{\lambda=1}^K \int_{Q_{n-1}} \int_{-c_0}^0  \left(   (-Y_{\lambda}) (D_{Y^{\lambda}}^k w)^2 + \sum_{|\bal| = k}(D_{X^{\lambda}}^{\bal} w)^2 \right) \xi^2_{\lambda}  dY^{\lambda}dX^{\lambda} \\
{} & + \int_{U_0} \sum_{ |\mathbf{a}| = k} (D_x^{\mathbf{a}} w)^2 \xi_0^2 dx,
\end{split}
\]
where $\{\xi_{\lambda}\}$ is a partition of unity as described above and $w=w(x,t)$ is the solution of (\ref{ler}). Note that we are suppressing the diffeomorphisms mapping the charts $U_{\lambda}$ to $Q_{n-1} \times (-c_0,0]$.  We have
$I_k (t) \le C \| w \|_{H^k}^2$
and
 from Lemma \ref{lemmagarding} and part (ii) of Theorem \ref{thmbasic},
\begin{equation} \label{equivalence}
\begin{split}
 \| w \|_{H^{k-1}}^2 + \| w \|_{H^k(\Omega_{c_0/4})}^2 + \sum_{\lambda=1}^K \int_{Q_{n-1}} \int_{-c_0}^0 \sum_{|\bal|\le k} (D^{\bal}_{X^{\lambda}}w)^2 \xi_{\lambda}^2 dYdX
  \le  C I_k(t),
\end{split}
\end{equation}
for a uniform constant $C$ depending only on $n$, $k$, $\Omega_0$, $U_{\lambda}$, $\xi_{\lambda}$.  Note that to bound the third term on the left hand side, we apply part (ii) of Theorem \ref{thmbasic}, not to $\Omega_0$, but instead fixing $Y^{\lambda}$ and considering $w$ as a function of $X^{\lambda}$ alone.  

The following key lemma is the heart of this section.  

\begin{lemma} \label{klemma} For sufficiently large $N$ the following holds.  There is a positive integer $r$ depending only on $n$ and constant $C$ depending only on $n$, $N$, $T$, $\Omega_0$ and spatial $C^r$ bounds of $a^{ij}, b^i, f$   such that if $w(x,t)$ solves (\ref{ler}) then
\begin{equation} \label{kle}
\frac{d}{dt} I_{N} \le  - \frac{\ve}{C} I_{2N} + C I_{N}+ C^2_N (I_r+1),
\end{equation}
where 
$$C_N:= \sup_{t\in [0,T]} \bigg(1+\sum_{i,j} \|a^{ij}\|_{C^{N+r}} + \sum_i \|b^i\|_{C^{N+r}}+ \| f\|_{C^{N+r}} + \|g\|_{C^{N+r}} \bigg).$$
\end{lemma}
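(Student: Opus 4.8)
The goal is an energy estimate for the quantity $I_N(t)$ along solutions of the regularized equation \eqref{ler}. The plan is to differentiate $I_N(t)$ in time, substitute the equation \eqref{ler} for $\partial_t w$, and integrate by parts, tracking three sources of terms: (a) the good dissipative term coming from $-\ve L_{2N}w$, which should produce $-\frac{\ve}{C}I_{2N}$ up to lower-order errors; (b) the terms coming from the degenerate operator $a^{ij}w_{ij} + b^i w_i + fw$, which must be shown to be controlled by $CI_N$ plus the $C_N^2(I_r+1)$ remainder; and (c) the inhomogeneous term $g$, absorbed into the $C_N^2(I_r+1)$ piece via Cauchy–Schwarz. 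The boundary conditions $\partial_\nu^j w = 0$ for $j = N,\dots,2N-1$ are exactly what make the integrations by parts in the $-\ve L_{2N}w$ term produce no boundary contributions up to order controlled by $I_{2N}$; this is the Kohn–Nirenberg mechanism, and the choice of $L_{2N}$ in \eqref{L2N} (sum of pure normal derivatives $D_{Y^\lambda}^{2N}$ and pure tangential derivatives $D_{X^\lambda}^{2\bal}$, patched with $\xi_\lambda^2$) is designed precisely so that the formal self-adjoint structure gives $\int \xi_\lambda^2 (D_Y^N w)(\ldots) $ type terms with the right sign.

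\textbf{Key steps, in order.} First, I would differentiate $I_N$ term by term. The interior piece $\frac{d}{dt}\int_{\Omega_0} w^2 = 2\int w\, w_t$ is straightforward. For each boundary chart, $\frac{d}{dt}\int (-Y_\lambda)(D_{Y^\lambda}^N w)^2 \xi_\lambda^2 = 2\int (-Y_\lambda) (D_{Y^\lambda}^N w)(D_{Y^\lambda}^N w_t)\xi_\lambda^2$, and similarly for the tangential terms; then substitute $w_t$ from \eqref{ler}. Second, handle the $-\ve L_{2N}w$ contributions: commute $D_{Y^\lambda}^N$ (resp. $D_{X^\lambda}^{\bal}$) past $L_{2N}$, integrate by parts $N$ times, and use the boundary conditions to kill boundary terms; the main term is $-\ve\int \xi_\lambda^2 (-Y_\lambda)|D_Y^{2N}w|^2$ plus $-\ve\int \xi_\lambda^2 |D_X^{\bal} D_Y^{\ldots}|^2$, which together with \eqref{knY} and the definition of $I_{2N}$ give $-\frac{\ve}{C}I_{2N}$; all commutator and cutoff-derivative terms are lower order in the number of derivatives and can be bounded by $\delta\ve I_{2N} + C_\delta\ve I_N$ or absorbed. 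The weight $(-Y_\lambda)$ and the assumption \eqref{DYx} that $\xi_\lambda$ has no normal derivative near the boundary are both needed here to keep boundary terms under control. Third — and this is the delicate part — handle the degenerate term $a^{ij}w_{ij}$: after commuting $N$ derivatives through and integrating by parts once, the top-order contribution is essentially $-\int a^{ij} D(D^N w)_i (D^N w)_j \,\xi^2$ type expressions, and one must use (A$_1$) (so $a^{in}\nu_i = 0$, i.e. the degenerate direction is normal) together with (A$_2$) and the Fichera condition (B) to show the boundary terms produced when the derivative hits $a^{ij}$ or the cutoff have a favorable sign or are bounded by $CI_N$. This is exactly where the dimensional restriction and the structure conditions enter. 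The remaining lower-order terms (derivatives of $b^i, f, g$ hitting $w$ of lower order) are handled by Theorem \ref{thmbasic}(iii) to split $\|a^{ij}\|_{C^{N+r}}\|w\|_{C^{\text{low}}}$-type products, Sobolev embedding, and the interpolation inequality \eqref{equivalence}, producing the $C_N I_N^{1/2}I_N^{1/2}$ and $C_N^2(I_r + 1)$ contributions; then absorb using Cauchy–Schwarz with a small parameter.

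\textbf{Main obstacle.} The hard part is step three: controlling the boundary contributions from the genuinely degenerate second-order term $a^{ij}w_{ij}$ after differentiating $N$ times. One has no boundary data on $w$ itself (only on its high normal derivatives), so one cannot integrate by parts freely in the normal direction against $a^{ij}$; instead one must exploit that (A$_1$) forces $a^{ij}$ to degenerate precisely in the normal direction, convert normal factors of $a^{ij}$ into powers of $(-Y_\lambda)$ using (A$_2$) (Taylor expansion of $a^{nn}$ near the boundary, using $a^{nn} = 0$ and $\partial_n a^{nn} < 0$), and then absorb these weighted terms using \eqref{knY} and the weight already built into $I_N$. Simultaneously one must verify, via the same computation as in the proof of Proposition \ref{propunique} leading to \eqref{crucial}, that the first-order term $b^i$ combines with the curvature terms from the graph of $\partial\Omega_0$ to give a boundary term with the correct sign under (B). Keeping all the commutators, cutoff derivatives, and coordinate-change Jacobians organized so that nothing of order higher than $I_N$ (or $\ve I_{2N}$) survives without the right sign is the real technical burden; choosing $N$ large enough is what ultimately makes the Gårding-type inequality \eqref{garding} strong enough to absorb the tangential error terms into the dissipation.
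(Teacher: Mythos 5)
Your high-level outline (differentiate $I_N$, split into a dissipative $-\ve L_{2N}$ piece, the degenerate part, and $g$; integrate by parts using the boundary conditions to kill boundary terms in the $L_{2N}$ piece; use the structure conditions to control the degenerate part) matches the paper. But you have misidentified the central mechanism for controlling the degenerate term, which is the genuine content of this lemma, and your two proposed substitutes both fail.

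The key computation you are missing is the following. When $D_Y^N$ is pushed through $a^{ij}\partial_i\partial_j$, the leading commutator term is $N(D_Y a^{ij})(D_Y^{N-1}w_{ij})$, and it is not a harmless error. After carefully integrating by parts (this is the content of the chain of estimates culminating in \eqref{3estimate2}), the boundary-adjacent part of $\fbox{2}$ produces a term
$$\int_{Q_{n-1}}\int_{-c_0}^0 \Bigl((N-1)(D_Y a^{nn}) + b^n - a^{nj}_j\Bigr)(D_Y^N w)^2\,\xi^2\,dY\,dX.$$
It is exactly here that (A$_2$) is used: it guarantees $D_Y a^{nn}\le -c<0$, so that for $N$ large the coefficient $(N-1)D_Ya^{nn} + b^n - a^{nj}_j$ is bounded above by $-\delta<0$ regardless of the sign of $b^n - a^{nj}_j$. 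The resulting negative term $-\delta\int(D_Y^N w)^2\xi^2$ is indispensable: it is what absorbs the numerous $\delta\int(D_Y^N w)^2\xi^2$ error terms that arise throughout from interpolation (each application of \eqref{uselg}). In other words, the degenerate operator itself, through its commutator with $D_Y^N$, supplies a good quadratic term — a term which is not present a priori in $I_N$ and does not come from the $\ve$-regularization. Your first guess (Taylor-expand $a^{nn}\approx -cY$ and absorb via \eqref{knY} against the built-in weight) does not produce this negative definite term; it can only trade one weighted $L^2$ norm for another and has no sign. Your second guess about the role of large $N$ (``making the Gårding inequality strong enough'') is also incorrect: the Gårding-type absorption in \eqref{garding} and \eqref{uselg} runs on a small parameter $\delta$ chosen independently of $N$, and indeed needs the above negative term to absorb $\delta\int(D_Y^N w)^2\xi^2$, not the other way around.

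Two smaller inaccuracies. First, the Fichera condition (B) is \emph{not} used in the normal piece $\fbox{2}$ at all (the paper points this out explicitly: large $N$ and (A$_2$) make that unnecessary); it is used only in the tangential piece $\fbox{3}$, where after integrating $(b^i-a^{ij}_j)((D_X^\bal w)^2)_i$ by parts the boundary term $\int_{Y=0}(b^n-a^{nj}_j)(D_X^\bal w)^2\xi^2$ appears and (B) gives it the correct sign. Second, the curvature/graph computation from Proposition \ref{propunique} leading to \eqref{crucial} is not invoked here: in the special boundary-adapted coordinates $(X,Y)$ the boundary is the flat slice $Y=0$, so (B) appears directly in the form $b^n-\partial_j a^{nj}\le 0$ with no second-fundamental-form correction. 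Without identifying the $(N-1)D_Ya^{nn}$ term and its sign, your outline cannot close the estimate.
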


\begin{proof}[Proof of Lemma \ref{klemma}]
In what follows, we will denote by $C$ a uniform constant as in the assertion of the lemma.  We first claim that
\begin{equation} \label{intws}
\frac{d}{dt} \int_{\Omega_0} w^2 dx \le   C(I_{r}+1),
\end{equation}
for any integer $r$ with $r \ge n/2+2$.

Compute
\begin{equation} \label{0}
\begin{split}
\frac{d}{dt} \int_{\Omega_0} w^2 dx = -2\ve \int_{\Omega_0} w L_{2N} w  dx + 2 \int_{\Omega_0} w \left( a^{ij} w_{ij} + b^i w_i +fw+g \right) dx.
\end{split}
\end{equation}
Recalling condition (A$_1$), we integrate by parts to obtain
\begin{equation} \label{1}
\begin{split}
\int_{\Omega_0} 2w (a^{ij} w_{ij} + b^i w_i) dx = {} & \int_{\Omega_0} (-2a^{ij} w_i w_j + (-a^{ij}_j + b^i) (w^2)_i) dx \\
= {} & \int_{\Omega_0} (-2a^{ij} w_i w_j + (a^{ij}_j - b^i)_i (w^2) ) dx + \int_{\partial \Omega_0} \nu_i (b^i  -a^{ji}_j) w^2 dx\\
\le {} & C \int_{\Omega_0} w^2dx +C \| w \|^2_{H^{r-1}} \\
\le {} & CI_{r},
\end{split}
\end{equation}
for any integer $r$ with $r \ge n/2+2$,
where we used part (i) of Theorem \ref{thmbasic} and (\ref{equivalence}).  Observe that in the case $n>1$, where (B) is assumed, the right hand side of (\ref{1}) can be replaced by $C \int_{\Omega_0} w^2dx.$

We also have
\begin{equation} \label{2}
\begin{split}
\int_{\Omega_0} 2w (fw+g) dx 
\le {} & C \int_{\Omega_0} (w^2 + g^2) dx.
\end{split}
\end{equation}
Next, recalling  (\ref{L2N})  we have
\[    
\begin{split}
-2\ve \int_{\Omega_0} w L_{2N} w dx = {} & -2\ve \sum_{\lambda=1}^K \int_{Q_{n-1}} \int_{-c_0}^0 \xi^2_{\lambda} w \left( D^{2N}_{Y^{\lambda}} w + \sum_{|\bal | = N} D^{2\bal}_{X^{\lambda}} w  \right) V_{\lambda} dY^{\lambda} dX^{\lambda} \\
{} & -2\ve \int_{\Omega_0} \xi_0^2  w \sum_{|\mathbf{a}|=N} D_{x}^{2\mathbf{a}} w dx,
\end{split}
\]
where $V_{\lambda}$ are smooth positive functions arising from the change of variables.   Integrating by parts we obtain
\begin{equation} \label{3}
\begin{split}
\lefteqn{
-2\ve\int_{\Omega_0} w L_{2N} w dx } \\ \le {} &   -\ve \sum_{\lambda=1}^K \int_{Q_{n-1}} \int_{-c_0}^0 \left( (D_{Y^{\lambda}}^N  w)^2 + \sum_{|\bal| = N} ( D^{\bal}_{X^{\lambda}} w )^2 \right) V_{\lambda} \xi_{\lambda}^2 dY^{\lambda} dX'^{\lambda}  \\
{} & -\ve \int_{\Omega_0} \xi_0^2\sum_{|\mathbf{a}|=N}  (D_{x}^{\mathbf{a}} w)^2dx + C \ve\| w\|^2_{H^{N-1}} \\
\le {} &  - \frac{\ve}{C} \| w\|^2_{H^N} + C \ve \delta \sum_{|\mathbf{a}| =N} \int_{\Omega_0} |D^{\mathbf{a}}w|^2 dx  + C_{\delta} \ve \int_{\Omega_0} w^2dx \\
\le {} &  C_{\delta} \ve \int_{\Omega_0} w^2dx,
\end{split}
\end{equation}
where we used Lemma \ref{lemmagarding} and part (ii) of Theorem \ref{thmbasic} and chose $\delta>0$ sufficiently (uniformly) small.  When integrating by parts in $Y^{\lambda}$, we use the fact that $D^N_{Y^{\lambda}}w, \ldots, D^{2N-1}_{Y^{\lambda}} w$ vanish on the boundary $Y^{\lambda}=0$.  When we integrate by parts in $X^{\lambda}_1, \ldots, X^{\lambda}_{n-1}$ there are no boundary terms since $\xi_{\lambda}$ vanishes at $X^{\lambda}_{\ell} = \pm 1$. 

Combining (\ref{0}), (\ref{1}), (\ref{2}) and (\ref{3}) gives (\ref{intws}).

Next, it is convenient to work in our special coordinate charts as described above.   We assume now that $n \ge 2$, since the case of $n=1$  is simpler and will be addressed at the end of the proof.  
Fix an $\lambda$ between $1$ and $K$ and work in the fixed chart $U_{\lambda}$.  When we change coordinates, the coefficients of the second order operator $a^{ij} w_{ij} + b^i w_i + fw$ change.  Recalling Remark \ref{remarkA1}, (ii), conditions (A$_1$), (A$_2$),  (B) and (B') become 
\begin{equation} \tag{A$_1$}
a^{in}=0, \ i=1, \ldots, n, \quad   \textrm{on } Y^{\lambda}=0,
\end{equation}
\begin{equation} \tag{A$_2$}
 \partial_n a^{nn}<0, \quad  \textrm{on } Y^{\lambda}=0,
 \end{equation}
\begin{equation} \tag{B}
 \quad  b^n - \partial_j a^{nj} \le 0,  \quad \textrm{on } Y^{\lambda}=0,
\end{equation}
\begin{equation} \tag{B'}
 \quad  b^n - \partial_j a^{nj} < 0,  \quad \textrm{on } Y^{\lambda}=0.
\end{equation}
Note that $D_{Y^{\lambda}}$ and $D_{X_i^{\lambda}}$, for $i=1, \ldots, n-1$, are the same as $\partial_n$ and $\partial_i$, and we will use these two notations interchangeably.

Define for $\lambda=1, \ldots, K$,
$$I_{N, \lambda}(t) := \int_{Q_{n-1}} \int_{-c_0}^0  \left(   (-Y^{\lambda}) (D_{Y^{\lambda}}^N w)^2 + \sum_{|\bal| = N}(D_{X^{\lambda}}^{\bal} w)^2 \right) \xi^2_{\lambda}  dY^{\lambda}dX^{\lambda}.$$
Then
\[
\begin{split}
\lefteqn{\frac{d}{dt} I_{N, \lambda} } \\= {} & -2 \ve  \int_{Q_{n-1}} \int_{-c_0}^0 \left( -Y^{\lambda} (D_{Y^{\lambda}}^{N} w) D^{N}_{Y^{\lambda}} L_{2N} w +  \sum_{|\bal| = N} (D^{\bal}_{X^{\lambda}} w) D^{\bal}_{X^{\lambda}} L_{2N} w \right) \xi^2_{\lambda} dY^{\lambda} dX^{\lambda} \\
{} & + \int_{Q_{n-1}} \int_{-c_0}^0 2 (-Y^{\lambda})(D_{Y^{\lambda}}^{N} w) D_{Y^{\lambda}}^{N} (a^{ij} w_{ij} + b^i w_i + f w + g)  \xi^2_{\lambda} dY^{\lambda} dX^{\lambda} \\
{} & + \int_{Q_{n-1}} \int_{-c_0}^0 \sum_{|\bal| = N} 2 (D_{X^{\lambda}}^{\bal} w) D_{X^{\lambda}}^{\bal} (a^{ij} w_{ij} + b^i w_i + f w + g)  \xi^2_{\lambda} dY^{\lambda} dX^{\lambda}  \\
  =: {} & \fbox{1} + \fbox{2} + \fbox{3}.
\end{split}
\]
For $\fbox{1}$,  carry out $N$ integrations by parts to obtain
\begin{equation} \label{equationbox1}
\begin{split}
\fbox{1} \le {} & - \ve \int_{Q_{n-1}} \int_{-c_0}^0 \bigg\{ \sum_{\mu =1}^K \xi_{\lambda}^2 \xi_{\mu}^2 \bigg( -Y^{\lambda} (D_{Y^{\lambda}}^N D_{Y^{\mu}}^N w)^2  + \sum_{ |\bal| = N} (-Y^{\lambda}) (D_{Y^{\lambda}}^N D_{X^{\mu}}^{\bal} w)^2  \\
{} & + \sum_{ |\bal| = N} (D_{X^{\lambda}}^{\bal} D^N_{Y^{\mu}} w)^2 + \sum_{ |\bal| = N} \sum_{ |\bbe| = N} (D^{\bal}_{X^{\lambda}} D^{\bbe}_{X^{\mu}} w)^2 \bigg)  \\
{} & + \sum_{|\mathbf{a}|=N} (-Y^{\lambda})(D_{Y^{\lambda}}^N D_{x}^{\mathbf{a}} w)^2 \xi_0^2 \xi_{\lambda}^2 + \sum_{ |\bal| = N} \sum_{|\mathbf{a}|=N} (D_{X^{\lambda}}^{\bal} D_{x}^{\mathbf{a}} w)^2 \xi_0^2 \xi_{\lambda}^2 \bigg\} dY^{\lambda} dX^{\lambda} \\
{} & + \frac{\ve}{C'} I_{2N} + C' \ve\int_{\Omega_0}w^2 dx, 
\end{split}
\end{equation}
for a large constant $C'$ which we can increase if necessary.  The above needs some justification, since there are lower order terms arising when derivatives land on $-Y^{\lambda}$ or cut-off functions, or functions arising from the change of variables.  First we consider the term where a $Y^{\mu}$ derivative lands on the term $(-Y^{\lambda})$.  Note that since $-Y$ denotes the distance from the boundary, it is the same variable in each coordinate patch.  We have a term like
$$2\ve \int_{Q_{n-1}} \int_{-c_0}^0 N (D_{Y^{\lambda}}^N D_{Y^{\mu}}^{N-1} w)(D_{Y^{\lambda}}^N D_{Y^{\mu}}^{N} w)\xi_{\lambda}^2 \xi_{\mu}^2dY^{\lambda} dX^{\lambda},$$
which we may write as
$$\ve \int_{Q_{n-1}} \int_{-c_0}^0 N \left( D_{Y^{\mu}} (D_{Y^{\lambda}}^N D_{Y^{\mu}}^{N-1} w)^2 \right) \xi_{\lambda}^2 \xi_{\mu}^2dY^{\lambda} dX^{\lambda},$$
and integrating $D_{Y^{\mu}}$ by parts and using (\ref{DYx}) gives a term of order $\ve \| w \|^2_{H^{2N-1}(\Omega_{c_0/2})}$, which by \eqref{equivalence} can be bounded by $\frac{\ve}{C'} I_{2N} + C' \ve\int_{\Omega_0}w^2 dx$.

Next consider a term where a $D_X$ derivative lands on a cut-off function.  One such term is of the form 
$$- 2\ve \int_{Q_{n-1}} \int_{-c_0}^0 \xi_{\mu}^2 (D_{X_i^\mu} \xi_{\lambda}^2)  (D^{\bal}_{X^{\lambda}} D^{\bbe}_{X^{\mu}} w)  (D^{\bal}_{X^{\lambda}} D_{X_i^{\mu}} D^{\bbe}_{X^{\mu}} w)dY^{\lambda}dX^{\lambda},$$ 
with $|\bal|=N$ and $|\bbe |=N-1$.  We can control this by
$$  \frac{\ve}{C} \int_{Q_{n-1}} \int_{-c_0}^0 \xi_{\mu}^2 \xi_{\lambda}^2 (D^{\bal}_{X^{\lambda}} D_{X_i^{\mu}} D^{\bbe}_{X^{\mu}} w)^2dY^{\lambda}dX^{\lambda} + C' \ve \int_{Q_{n-1}} \int_{-c_0}^0 \xi_{\mu}^2 (D^{\bal}_{X^{\lambda}} D^{\bbe}_{X^{\mu}} w)^2dY^{\lambda}dX^{\lambda}.$$
Note that the number of derivatives in the second term is only $2N-1$ so we can bound it  by $\frac{\ve}{C'} I_{2N} + C' \ve\int_{\Omega_0}w^2 dx$.  Similar arguments using \eqref{equivalence} and Lemma \ref{lemmagarding} can be used to bound other lower order terms.

For the rest of the proof, we will drop the $\lambda$ superscripts, for ease of notation.  
We estimate, recalling that  $D_Y^{N}w=0$ on $Y=0$,
\begin{equation} \label{3estimate0}
\begin{split}
\fbox{2} \le {}&  \int_{Q_{n-1}} \int_{-c_0}^0 2(-Y) (D_Y^N w) \bigg[ a^{ij} D_Y^N w_{ij} + N (D_Y a^{ij})( D_Y^{N-1} w_{ij}) + b^i D_Y^N w_i  \bigg] \xi^2 dYdX\\ 
{}&  + C  \int_{Q_{n-1}} \int_{-c_0}^0 |Y|| D_Y^N w| \sum_{\ell=r_0}^{N-r_0}   \bigg(  |D_Y^{\ell} a^{ij}| \, |D_Y^{N-\ell} w_{ij}|  + |D_Y^{\ell} b^i | \,  |D_Y^{N-\ell}w_i| \\ {} & + |D_Y^{\ell} f| \,  |D_Y^{N-\ell} w| \bigg) \xi^2 dYdX + \delta \int_{Q_{n-1}} \int_{-c_0}^0 (D_Y^{N} w)^2 \xi^2 dYdX \\ {} &  + C_{\delta} I_N  +C_N^2 (I_{r_0+2} +1),
\end{split}
\end{equation}
for $r_0=r_0(n)$ chosen so that $r_0 >n/2+2$ and  for a small $\delta>0$ to be determined later.  Here we are using Lemma \ref{lemmagarding} to bound 
\begin{equation} \label{uselg}
\begin{split}
\int_{Q_{n-1}} \int_{-c_0}^0 (-Y) (D_Y^{k} D^{\bal}_X w)^2 \xi^2 dYdX \le {} & \int_{Q_{n-1}} \int_{-c_0}^0 (D_Y^{k} D^{\bal}_X w)^2 \xi^2 dYdX \\
\le{} & \delta \int_{Q_{n-1}} \int_{-c_0}^0 (D_Y^{N} w)^2 \xi^2 dYdX + C_{\delta} I_N,
\end{split}
\end{equation}
whenever  $k+|\bal| = N$ and $|\bal| =1, 2$. Here we assume $c_0\le 1$ without loss of generality.  We will use (\ref{uselg}) several times in the computations that follow.

Integrating by parts the first two terms in the square brackets of (\ref{3estimate0}), we have
\begin{equation} \label{3estimate}
\begin{split}
\fbox{2} \le {}&  \int_{Q_{n-1}} \int_{-c_0}^0 \bigg[ -2(-Y)a^{ij} (D_Y^N w_i) (D_Y^N w_j) +  (-Y)(b^i-a^{ij}_j) ((D_Y^N w)^2)_i \\
{} & + 2a^{in} (D_Y^N w)(D_Y^N w_i) 
 - 2(-Y)N (D_Y a^{ij}) (D_Y^N w_j) (D_Y^{N-1} w_i)  \\
 {} & + 2N (D_Y^N w) (D_Y a^{in}) (D_Y^{N-1} w_i)  
 \bigg] 
 \xi^2 dYdX \\ {} & + C  \int_{Q_{n-1}} \int_{-c_0}^0 |Y|| D_Y^N w| \sum_{\ell=r_0}^{N-r_0}   \bigg(  |D_Y^{\ell} a^{ij}| \, |D_Y^{N-\ell} w_{ij}|  + |D_Y^{\ell} b^i | \,  |D_Y^{N-\ell}w_i| \\ {} & + |D_Y^{\ell} f| \,  |D_Y^{N-\ell} w| \bigg) \xi^2 dYdX  - \int_{Q_{n-1}} \int_{-c_0}^0 4(-Y)a^{ij} \xi \xi_j (D_Y^Nw) (D_Y^N w_i) dY dX \\
 \\ {} & + \delta \int_{Q_{n-1}} \int_{-c_0}^0 (D_Y^{N} w)^2 \xi^2 dYdX  + C_{\delta} I_N  +C_N^2 (I_{r_0+2} +1).
\end{split}
\end{equation}

We estimate the terms above one by one.  The first term in the square brackets of (\ref{3estimate}) is already nonpositive.  For the second term,
\begin{equation} \label{claimab}
\begin{split}
\lefteqn{\int_{Q_{n-1}} \int_{-c_0}^0 (-Y)(b^i-a^{ij}_j) ((D_Y^N w)^2)_i  \xi^2 dYdX } \\ \le {} &  \int_{Q_{n-1}} \int_{-c_0}^0 (b^n-a^{nj}_j) (D_Y^N w)^2  \xi^2 dYdX   + CI_N.
\end{split}
\end{equation}
Next
\begin{equation} \label{nextclaim}
\begin{split}
\lefteqn{ \int_{Q_{n-1}} \int_{-c_0}^0 2a^{in} (D_Y^N w)(D_Y^N w_i) \xi^2 dYdX } \\ = {} &  \int_{Q_{n-1}} \int_{-c_0}^0 a^{in} ((D_Y^N w)^2)_i \xi^2 dYdX \\ 
  \le {} &   -\int_{Q_{n-1}} \int_{-c_0}^0 D_Ya^{nn} (D_Y^N w)^2 \xi^2 dYdX + CI_N,
\end{split}
\end{equation}
since $a^{in}_i=O(|Y|)$ if $i<n$.

Next, compute 
\begin{equation} \label{long}
\begin{split}
\lefteqn{- 2N\int_{Q_{n-1}} \int_{-c_0}^0 (-Y)(D_Y a^{ij}) (D_Y^N w_j) (D_Y^{N-1} w_i)\xi^2 dYdX }\\
= {} & - 2N\int_{Q_{n-1}} \int_{-c_0}^0 \bigg[ (-Y)\sum_{i,j=1}^{n-1} (D_Y a^{ij}) (D_Y^N w_j) (D_Y^{N-1} w_i)  \\ {} & + (-Y) \sum_{i=1}^{n-1}(D_Y a^{in}) (D_Y^{N+1} w) (D_Y^{N-1}  w_i)  
 + (-Y) \sum_{i=1}^{n-1}(D_Y a^{ni}) (D_Y^N w_i) (D_Y^{N} w) \\ {} &  +  (-Y) (D_Y a^{nn}) (D_Y^{N+1}  w) (D_Y^{N}  w) \bigg] \xi^2 dYdX \\
\le {} & - N\int_{Q_{n-1}} \int_{-c_0}^0 \bigg[  (-Y) \sum_{i,j=1}^{n-1} D_Y \left( (D_Y a^{ij}) (D_Y^{N-1} w_i)(D_Y^{N-1}w_j) \right) 
 \\ 
  {} & + 2\sum_{i=1}^{n-1} (D_Ya^{in}) (D_Y^N w) (D_Y^{N-1}  w_i) 
  + (-Y)(D_Y a^{nn}) D_Y((D_Y^{N}  w)^2)  \bigg]\xi^2 dYdX  \\ {} & + \delta \int_{Q_{n-1}} \int_{-c_0}^0 (D_Y^{N} w)^2 \xi^2 dYdX  +C_{\delta} I_N \\
 \le {}& - N\int_{Q_{n-1}} \int_{-c_0}^0   (D_Y a^{nn}) (D_Y^N w)^2 \xi^2 dYdX 
 + \delta \int_{Q_{n-1}} \int_{-c_0}^0 (D_Y^{N} w)^2 \xi^2 dYdX  + C_{\delta} I_N,
\end{split}
\end{equation}
where to obtain the first inequality, we integrated by parts in $Y$ in the second term of the square brackets.  We also made use of (\ref{uselg}) several times.

But we also have,
\begin{equation} \label{good}
\begin{split}
\lefteqn{ \int_{Q_{n-1}} \int_{-c_0}^0 2N (D_Y^N w) (D_Y a^{in}) (D_Y^{N-1} w_i)\xi^2 dYdX  } \\ = {} & 2N \int_{Q_{n-1}} \int_{-c_0}^0   (D_Y a^{nn})(D_Y^N w)^2  \xi^2 dYdX    \\{} &+ \delta \int_{Q_{n-1}} \int_{-c_0}^0 (D_Y^{N} w)^2 \xi^2 dYdX  + C_{\delta} I_N.
\end{split}
\end{equation}

For $r_0 \le \ell \le N-r_0$,
\begin{equation} \label{mixed}
\begin{split}
\lefteqn{\int_{Q_{n-1}} \int_{-c_0}^0 |Y| |D_Y^N w| \,  |D^{\ell}_Y a^{ij}| \,  |D^{N-\ell} w_{ij} | \xi^2 dYdX } \\
\le {} & C\sum_{i,j} \| a^{ij} \|_{C^\ell} \| w_{ij} \|_{C^{N-\ell}} \left( \int_{Q_{n-1}} \int_{-c_0}^0 |Y| |D_Y^N w|^2 dYdX \right)^{1/2} \\
 \le  {} & C \sqrt{I_N} \sum_{i,j} \| a^{ij} \|_{C^{\ell}} \| w_{ij} \|_{C^{N-\ell}} \\
\le {} & C \sqrt{I_N} \sum_{i,j} \left(  \| a^{ij} \|_{C^{r_0}}  \| w \|_{C^{N-r_0+2}} + \| a^{ij} \|_{C^{N-r_0}}   \| w \|_{C^{r_0+2}} \right)\\
\le {} & CI_N + C C_N \sqrt{I_N} \| w \|_{H^{r_0+n/2+3}} \\
\le {} & C I_N+ C_N^2  I_{r} ,
\end{split}
\end{equation}
for $r\ge r_0 + n/2+4$. Here we used parts (i) and (iii) of Theorem \ref{thmbasic}.  We similarly control terms involving $D^{\ell}_Yb^i$ and $D^{\ell}_Y f$.

Finally, using the Cauchy-Schwarz inequality,
\begin{equation} \label{csi}
\begin{split}
\lefteqn{- \int_{Q_{n-1}} \int_{-c_0}^0 4(-Y)a^{ij} \xi \xi_j (D_Y^Nw) (D_Y^Nw_i) dY dX } \\  {} & \le \int_{Q_{n-1}} \int_{-c_0}^0 (-Y)a^{ij} (D_Y^N w_i) (D^N_Y w_j) \xi^2 dYdX + CI_N.
\end{split}
\end{equation}

Combining (\ref{3estimate}), (\ref{claimab}), (\ref{nextclaim}), (\ref{long}), (\ref{good}), (\ref{mixed}) and (\ref{csi})  gives,
\begin{equation} \label{3estimate2}
\begin{split}
\fbox{2} \le {} &  \int_{Q_{n-1}} \int_{-c_0}^0 \bigg[ -a^{ij} (-Y)(D_Y^N w)_i (D_Y^N w)_j  \\ {} &  +  \left( ( N -1)(D_Y a^{nn}) + b^n - a^{nj}_j \right) (D_Y^N w)^2   
 \bigg]  \xi^2 dYdX
\\ {} & +  4\delta \int_{Q_{n-1}} \int_{-c_0}^0 (D_Y^{N} w)^2 \xi^2 dYdX  + C_{\delta} I_N+C_N^2(I_r+1) \\
 \le {} & - \delta\int_{Q_{n-1}} \int_{-c_0}^0 (D_Y^{N} w)^2 \xi^2 dYdX+  C_{\delta} I_N+C_N^2(I_r+1),
\end{split}
\end{equation}
if we choose $\delta>0$ sufficiently (uniformly) small. The last inequality makes use of our assumptions.
By (A$_2$) then we have $D_Ya^{nn} \le -c<0$ for a small uniform constant $c>0$.  In this case we do not need to use the condition $b^n -a^{nj}_j \le 0$ on the boundary, since we can choose $N$ large.  This gives us a good (negative) term of the order $\int (D_Y^Nw)^2\xi^2$.  

\begin{remark}
If we do not have (A$_2$) but instead assume (B') then we have $D_Ya^{nn} \le 0$ and $b^n - a^{nj}_j \le -c<0$, and the rest of the argument works the same way.   This is what is needed to establish the last assertion of Remark \ref{remarkA1}, (i).
\end{remark}

From now on we have fixed $\delta>0$ so can write $C_{\delta}$ as $C$.

For $\fbox{3}$ we have
\begin{equation} \begin{split}
\fbox{3} \le {}&  \int_{Q_{n-1}} \int_{-c_0}^0 2\bigg[  \sum_{|\bal| = N} (D_{X}^{\bal} w) ( a^{ij} D_{X}^{\bal} w_{ij} + b^i D_{X}^{\bal} w_i) \\ {} &  + \sum_{|\bal| = N-1}\sum_{\ell=1}^{n-1}  N (D_X^{\bal} w_{\ell}) a^{ij}_{\ell} D_{X}^{\bal} w_{ij}) \bigg] \xi^2 dYdX  \\
{} & + \frac{\delta}{3} \int_{Q_{n-1}} \int_{-c_0}^0 (D_Y^{N} w)^2 \xi^2 dYdX
+ CI_N +C_N^2(I_r+1),
\end{split}
\end{equation}
where we argued as in (\ref{mixed}) above to deal with terms involving other derivatives of the coefficients $a^{ij}, b^i, f$.  Hence, integrating by parts,
\begin{equation}\label{2estimate}
\begin{split}
\fbox{3}\le {} &  \int_{Q_{n-1}} \int_{-c_0}^0  \bigg[ - 2 a^{ij} \sum_{|\bal| = N} (D_{X}^{\bal} w_i) (D_{X}^{\bal} w_j) + (b^i - a^{ij}_j)\sum_{|\bal| = N} ((D_{X}^{\bal} w)^2)_i \\
{} & -2 \sum_{|\bal| = N-1}\sum_{\ell=1}^{n-1}  N (D_X^{\bal} w_{\ell j}) a^{ij}_{\ell} D_{X}^{\bal} w_{i} \bigg] \xi^2 dY dX \\
{} & - \int_{Q_{n-1}} \int_{-c_0}^0 \sum_{ |\bal| = N} 4a^{ij} \xi \xi_j (D_{X}^{\bal} w) (D_{X}^{\bal} w_i) dY dX \\ {} & + \frac{2\delta}{3} \int_{Q_{n-1}} \int_{-c_0}^0 (D_Y^{N} w)^2 \xi^2 dYdX+ C I_N + C_N^2 (I_r+1),
\end{split}
\end{equation}
where we used the fact that $a^{in}_{\ell}=0$ on $Y=0$ for $\ell=1, \ldots, n-1$.

We have
\begin{equation}
\begin{split}
\lefteqn{- \int_{Q_{n-1}} \int_{-c_0}^0 2 \sum_{|\bal| = N-1}\sum_{\ell=1}^{n-1}  N a^{ij}_{\ell} (D_X^{\bal} w_{\ell j})  (D_{X}^{\bal} w_{i}) \xi^2 dY dX } \\ 
\le {} & - \int_{Q_{n-1}} \int_{-c_0}^0  \sum_{|\bal| = N-1}\sum_{\ell=1}^{n-1}  N D_{X_{\ell}} \bigg[ a^{ij}_{\ell} (D_X^{\bal} w_{j})  (D_{X}^{\bal} w_{i})  \bigg] \xi^2 dY dX \\
{} & +   \int_{Q_{n-1}} \int_{-c_0}^0  \sum_{|\bal| = N-1}\sum_{\ell=1}^{n-1}  N a^{ij}_{\ell \ell} (D_X^{\bal} w_{j})  (D_{X}^{\bal} w_{i})   \xi^2 dY dX \\
\le {} & \frac{\delta}{3} \int_{Q_{n-1}} \int_{-c_0}^0 (D_Y^{N} w)^2 \xi^2 dYdX + C I_N,
\end{split}
\end{equation}
where for the last line we integrate by parts in $X_{\ell}$ in the first term.

Using the Cauchy-Schwarz inequality, we estimate
\begin{equation}
\begin{split}
\lefteqn{
 -\int_{Q_{n-1}} \int_{-c_0}^0 \sum_{|\bal| = N} 4a^{ij} \xi \xi_j (D_{X}^{\bal} w) (D_{X}^{\bal} w_i) dY dX } \\
\le {} & \int_{Q_{n-1}} \int_{-c_0}^0 a^{ij} \sum_{|\bal| = N} (D_{X}^{\bal} w_i) (D_{X}^{\bal} w_j) \xi^2 dYdX +CI_N.
\end{split}
\end{equation}

We use the boundary condition (B) on $Y=0$ to get
\begin{equation} \label{2estimate2}
\begin{split}
\fbox{3} \le {} &  - \int_{Q_{n-1}} \int_{-c_0}^0 \sum_{|\bal|=N} a^{ij} (D_{X}^{\bal} w)_i (D_{X}^{\bal} w)_j \xi^2  dYdX  \\
{} & +   \delta \int_{Q_{n-1}} \int_{-c_0}^0 (D_Y^{N} w)^2 \xi^2 dYdX + CI_N + C_N^2 ( I_r+1).
\end{split}
\end{equation}

Finally, we bound
\begin{equation}\label{middle}
\begin{split}
\lefteqn{\frac{d}{dt} \int_{U_0} \sum_{|\mathbf{a}|=N} (D^{\mathbf{a}}w)^2 \xi_0^2 dx } \\ \le {} &  -\frac{\ve}{2} \int_{U_0} \sum_{|\mathbf{a}|=2N} (D^{\mathbf{a}}w)^2 \xi_0^2 dx + C\ve \| w\|_{H^{2N-1}(\Omega_{c_0/4})} \\
{} & - \int_{U_0} \sum_{|\mathbf{a}|=N} a^{ij} (D^{\mathbf{a}} w_i)(D^{\mathbf{a}} w_j)\xi_0^2dx + CI_N + C_N^2 (I_r+1), 
\end{split}
\end{equation}
after integration by parts.  In particular, lower order terms where derivatives land on cut-off functions can be controlled by the first  four terms on the right hand side of (\ref{middle}). Note that $(a^{ij})$ has a positive lower bound on $U_0$.

Combining (\ref{middle}) with (\ref{equationbox1}), (\ref{3estimate2}) and (\ref{2estimate2}) and summing $\lambda$ from $1$ to $K$, gives (\ref{kle}).  Here we are making use of terms on the right hand side of (\ref{equationbox1}), together with Lemma \ref{lemmagarding}, to control $C\ve \| w\|_{H^{2N-1}(\Omega_{c_0/4})}$.

This completes the proof of Lemma \ref{klemma} for $n\ge 2$. For dimension $n=1$ the proof is simpler since there are no $X$ coordinates.  The boundary condition (B) is used only to establish (\ref{2estimate2}) and hence is not needed in this case.
\end{proof}

We now complete the proof of Theorem \ref{theoremmain}.

\begin{proof}[Proof of Theorem \ref{theoremmain}]  First we will show the existence of a unique smooth solution of (\ref{le}), and then we will prove the estimate (\ref{tameest}).  In what follows, denote by $\tilde{C}$ a constant that may depend on bounds for the right hand side of (\ref{tameest}), and in particular may be uncontrolled as $N\rightarrow \infty$.

For each fixed large even integer $N$, denote by $w_{\ve}(t)$ the solution of (\ref{ler}).  From Lemma \ref{klemma},  we have $I_N(w_{\ve}(t)) \le \tilde{C}$ and hence in particular
\begin{equation} \label{sbbd}
\int_0^T \| w_{\ve}(t) \|^2_{H^{N-1}} dt \le \tilde{C}.
\end{equation}
It follows that for a sequence $\ve_{\ell} \rightarrow 0$, we have $w_{\ve_{\ell}} \rightharpoonup w$ weakly in $L^2(\Omega_0 \times [0,T])$.  Moreover,  for each multi-index $\mathbf{a}$ with $|\mathbf{a}|\le N-1$, 
 the 
weak derivative $D^{\mathbf{a}}w$ exists and is in $L^2 (\Omega_0 \times [0,T])$.  After passing to a subsequence, we may assume that $D^{\mathbf{a}} w_{\ve} \rightharpoonup D^{\mathbf{a}}w$ in $L^2(\Omega_0 \times [0,T])$ as $\ve=\ve_{\ell} \rightarrow 0$.

Now let $\varphi$ be a smooth test function compactly supported in $\Omega_0 \times (0,T)$.  We have
$$\int_0^T \int_{\Omega_0} w_{\ve}  D^{\mathbf{a}} D_t \varphi dx dt = \int_0^T \int_{\Omega_0} \left( \ve L_{2N} w_{\ve} - a^{ij}(w_{\ve})_{ij} - b^i (w_{\ve})_i - fw_{\ve}-g) \right)D^{\mathbf{a}} \varphi dx dt.$$
We now let $\ve = \ve_{\ell} \rightarrow 0$.  Using  integration by parts we have $$\int_0^T \int_{\Omega_0} \ve (L_{2N}w_{\ve}) (D^{\mathbf{a}} \varphi) dxdt=O(\ve),$$ and hence for $|\mathbf{a}|\le N-3$,
$$\int_0^T \int_{\Omega_0} w  D^{\mathbf{a}} D_t \varphi dx dt = (-1)^{1+|\mathbf{a}|} \int_0^T \int_{\Omega_0} D^{\mathbf{a}} (a^{ij} w_{ij} + b^i w_i + fw +g) \varphi dxdt.$$
Then the weak derivative $D^{\mathbf{a}} D_tw$ exists, is bounded in $L^2(\Omega_0 \times [0,T])$, and is equal to
$$D^{\mathbf{a}} D_t w = D^{\mathbf{a}} ( a^{ij} w_{ij} + b^i w_i + fw +g).$$
In particular we have
\begin{equation} \label{mew}
\frac{\partial w}{\partial t} = a^{ij} w_{ij} + b^i w_i + fw +g, \quad \textrm{on } \Omega_0 \times (0,T).
\end{equation}
Replacing $D_t \varphi$ by $D_t^m \varphi$ for $m = 2, 3,  \ldots$ and repeating the argument above we see that for $2m +|\mathbf{a}| \le N-1$, the weak derivative $D_t^m D^{\mathbf{a}} w$ exists and 
$$\| D_t^m D^{\mathbf{a}} w \|_{L^2(\Omega_0\times [0,T])} \le \tilde{C}.$$
For $N$ sufficiently large, depending only on $n$, we see that $w$ is $C^2$ in space and $C^1$ in time in $\ov{\Omega}_0\times [0,T]$, and hence (\ref{mew}) holds in the classical sense on $\ov{\Omega}_0 \times [0,T]$.  By Proposition \ref{propunique}, $w$ is the unique such solution to (\ref{mew}).

Since $N$ was arbitrary, we may let $N$ tend to infinity to obtain a unique smooth solution $w$ to (\ref{mew}).

Next, we prove the estimate (\ref{tameest}).  From Lemma \ref{klemma}, after taking $\ve \rightarrow 0$, we have
$$\frac{d}{dt} I_{N} e^{-Ct} \le C_N^2 (I_r+1)e^{-Ct},$$
where $I_{N}$ and $I_r$ are defined in terms of the solution $w$ of (\ref{mew}).  Since $r$ is fixed, we have $I_r\le C$ and hence integrating in time we obtain
$$I_{N} \le CC_N^2.$$
This implies that for any $\ell$,
\[
\begin{split}
\| w \|_{C^{\ell}} \le {} & \| w \|_{H^{\ell+\lceil n/2 \rceil+1}}  \\
\le {} & C(\ell)  \sup_{t \in [0,T]} \bigg( 1+ \sum_{i,j} \|a^{ij}\|_{C^{\ell+r}} + \sum_i \|b^i\|_{C^{\ell+r}}+ \| f\|_{C^{\ell+r}} + \|g\|_{C^{\ell+r}}\bigg),
\end{split}
\]
 increasing $r=r(n)$ if necessary.    This completes the proof.
\end{proof}

\section{Transformation to an equation on a fixed domain} \label{sectiontrans}

In this section we transform the porous medium equation (\ref{PME2}) to an equation on the fixed domain $\Omega_0$.  This  is also carried out by Daskalopoulos-Hamilton \cite{DH98}, but in a less explicit way.  A key point, which we will see below, is that the boundary data on $\partial \Omega_0$ is undetermined for $t>0$.

We work under the assumptions of Theorem \ref{mainthm}.  In particular, the nondegeneracy condition on $v_0$ means that 
\begin{equation} \label{nondegen}
v_0 + (\nabla v_0)^2 \ge c>0, \quad \textrm{on } \ov{\Omega}_0,
\end{equation}
for a uniform constant $c>0$.

For each  fixed $t \ge 0,$ the graph $y=v(x,t)$ is a set of points $(x,y)=(x_1, \ldots, x_n, y)$ in  $\mathbb{R}^{n} \times [0,\infty)$.   Define a vector-valued map $V: \Omega_0  \rightarrow \mathbb{R}^{n+1}$ by
$$V(x):= (-\nabla v_0(x), v_0(x)) = (-\partial_1 v_0(x), \ldots, -\partial_n v_0(x), v_0(x)).$$

For $x\in \Omega_0$, the vector $V(x)$ is transverse to the graph $y=v_0(x)$ in $\mathbb{R}^{n+1}$ at the point $(x, v_0(x))$.  Note that $V(x)$ is parallel to the $y=0$ hyperplane for $x \in \partial \Omega_0$.  Moreover, by (\ref{nondegen}), we have $|V| \ge c'>0$.  

Suppose that we have a solution $v(x,t)$ of the porous medium equation for $t \in [0,T]$ which is smooth on $\bigcup_{t \in [0,T]} \ov{\Omega}_t$.  For a sufficiently small $\ve>0$, and all $x \in \ov{\Omega}_0$, 
the parametrized line segment $$s \mapsto (x, v_0(x)) + sV(x), \quad -\ve < s <\ve,$$ intersects the graph $y=v(x,t)$ at a unique value of $s$, where we are shrinking $T>0$ if necessary.  We write $h(x,t)$ for this value of $s$.  
The function $h(x,t)$ satisfies
\begin{equation} \label{h}
(1+  h(x,t) )v_0(x) = v(x-\nabla v_0(x)h(x,t), t),
\end{equation}
and indeed (\ref{h})  can be used to define $h$.  Observe that $h(x,0)=0$.

Conversely, a smooth $h(x,t)$ for $t\in [0,T]$, which is sufficiently small, determines the graphs $y=v(x,t)$.

We now wish to write down the porous medium equation $v_t = (m-1)v \Delta v + |\nabla v|^2$  in terms of $h(x,t)$.  Differentiating (\ref{h}) with respect to $x_i$ we obtain
$$(1+h)v_{0,i} + h_i v_0 = v_j \left( \delta_{ij} - h_i v_{0,j} - h v_{0,ij} \right),$$
where we are evaluating $v_i$ and $v_j$ at $(x-\nabla v_0(x)h(x,t),t)$ and summing over the repeated $j$ index.  Hence
\begin{equation} \label{vxh}
v_j = A^{ji}(  (1+h)v_{0,i} +h_i v_0),
\end{equation}
where $A^{ji}$ is defined by 
\begin{equation} \label{defnA}
A^{ji}(\delta_{ik} - h_i v_{0,k} - h v_{0,ik} ) = \delta_{jk}
\end{equation} (we assume that $h$ and $|\nabla h|$ are small).
Note that $(A^{ji})$ is in general not a symmetric matrix, because $h_i v_{0,k}$ may not be symmetric in $i,k$.  Writing $A=(A^{ji})$ and $C= C_{ik} = \delta_{ik} - h_i v_{0,k} - hv_{0,ik}$ we have $AC=\textrm{Id} = CA$ and $A^T C^T =\textrm{Id}$, so in particular
$$A^{ij}(\delta_{ik} - h_k v_{0,i} - h v_{0,ik} ) = \delta_{jk}.$$
Observe that
$$\partial_{k} A^{ji} = A^{\ell i} A^{jr} (h_{rk} v_{0,\ell} + h_r v_{0,\ell k} + h_k v_{0,r\ell}+ hv_{0,r\ell k}).$$

Applying $\partial_k$ to (\ref{vxh}), remembering that we are evaluating  $v_j$ at $(x-\nabla v_0(x)h(x,t),t)$,
\begin{equation} \label{vxxh}
\begin{split}
\lefteqn{v_{ji} \left( \delta_{ik} - h_k v_{0,i} - h v_{0,ik} \right) } \\ = {} & A^{ji}((1+h) v_{0,ik} + h_k v_{0,i} + h_i v_{0,k} + h_{ik} v_0) \\ {} & + A^{\ell i} A^{jr} (h_{rk} v_{0,\ell} + h_r v_{0,\ell k} + h_k v_{0,r\ell}+ hv_{0,r\ell k})  (  (1+h)v_{0,i} +h_i v_0).
\end{split}
\end{equation}
Multiplying by $A^{qk}$ and summing over $k$ we get
\begin{equation} \label{vxxh2}
\begin{split}
v_{jq} = {} & A^{qk}A^{ji}((1+h) v_{0,ik} + h_k v_{0,i} + h_i v_{0,k} + h_{ik} v_0) \\ {} & + A^{qk}A^{\ell i} A^{jr} (h_{rk} v_{0,\ell} + h_r v_{0,\ell k} + h_k v_{0,r\ell}+ hv_{0,r\ell k})  (  (1+h)v_{0,i} +h_i v_0).
\end{split}
\end{equation}

On the other hand, differentiating  (\ref{h}) with respect to $t$ gives
$$h_t v_0 = -v_i v_{0,i}h_t+v_t$$
and from $v_t=(m-1)v\Delta v+ |\nabla v|^2$ we obtain
$$h_t (v_0+v_i v_{0,i}) = (m-1)(1+h)v_0 \Delta v +|\nabla v|^2.$$
Substituting from (\ref{vxh}), (\ref{vxxh2}), we see that $h(x,t)$ solves the following equation:
\begin{equation} \label{Fh}
\begin{split}
h_t  =  {} & \frac{1}{v_0 + A^{ij} ((1+h)v_{0,j}+h_j v_0) v_{0,i}} \cdot \\ {} & \bigg[ (m-1)(1+h)v_0 \bigg( A^{jk}A^{ji}((1+h) v_{0,ik}   + h_k v_{0,i} + h_i v_{0,k} + h_{ik} v_0) \\ {} & + A^{jk}A^{\ell i} A^{jr} (h_{rk} v_{0,\ell} + h_m v_{0,\ell k} + h_k v_{0,r\ell}+ hv_{0,r\ell k})  (  (1+h)v_{0,i} +h_i v_0) \bigg) \\
+ {} & A^{ji}A^{jk} ((1+h)v_{0,i} + h_i v_0)((1+h) v_{0,k} + h_k v_0) \bigg].
\end{split}
\end{equation}
Note that $(x,t) \in \ov{\Omega}_0 \times [0,T]$, so the domain is fixed.  Importantly, there are no boundary conditions for $h$.  The initial data is $h(x,0)=0$.

Write  equation (\ref{Fh}) for $h$ as $h_t = F(h, x)$, where $F$ is a second order differential operator in $h$.  We can write $F$ as follows.
\begin{equation} \label{F2h}
\begin{split}
F(h,x) = {} & \frac{1}{v_0 + A^{ij} ((1+h)v_{0,j}+h_j v_0) v_{0,i}} \bigg[  (m-1)(1+h)^2 v_0 A^{jk}A^{\ell i} A^{jr} v_{0,\ell}v_{0,i}  h_{r k}  \\ {} & + v_0^2 \varphi_{ij}(h,\nabla h, x) h_{ij} 
 +  (1+h)^2 A^{ji} A^{jk} v_{0,i}v_{0,k} + v_0 \psi (h, \nabla h, x)\bigg],
\end{split}
\end{equation}
for smooth functions $\varphi_{ij}$, $\psi$.  Define $L$ as the linearization of $F$ at small $h$, namely,
$$L(h,w,x) = \frac{d}{d\tau} F(h+\tau w, x)|_{\tau=0},$$ 
which takes the form
\begin{equation} \label{Ldefn}
L(h,w,x) = a^{ij} w_{ij} + b^i w_i + f w,
\end{equation}
for smooth functions $a^{ij}, b^i, f$ on $\ov{\Omega}_0$, which depend on $h=h(x,t)$ and its derivatives and $x$.  The symmetric matrix $(a^{ij}(x,t))$ is positive definite for $x \in \Omega_0$ and vanishes on the boundary $\partial \Omega_0$.

We have the following proposition.

\begin{proposition} \label{propPME}
Write $\nu= (\nu_1, \ldots, \nu_n)$ for the outward pointing normal to $\Omega_0$.  On $\partial \Omega_0$ we have
\begin{equation} \label{propeq1}
(a^{ij})=0, \quad (\partial_k a^{ij}) \nu_k \nu_{i} \nu_j <0, 
\end{equation}
and
\begin{equation} \label{propeq2}
\begin{split}
(b^i - \partial_j a^{ji}) \nu_i <  {} & 0, \quad \textrm{if } 1<m<2 \\
(b^i - \partial_j a^{ji}) \nu_i = {} & 0, \quad \textrm{if } m=2 \\
(b^i - \partial_j a^{ji}) \nu_i > {} & 0, \quad \textrm{if } m>2. \\
\end{split} 
\end{equation}
and 
\begin{equation} \label{propeq3}
b^i \nu_i \le 0.
\end{equation}
In particular, conditions \emph{(A$_1$)}, \emph{(A$_2$)} hold, and Fichera condition \emph{(B)} holds if $1 < m \le 2$.
\end{proposition}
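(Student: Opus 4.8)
The plan is to read off the coefficients $a^{ij},b^i$ directly from the explicit form \eqref{F2h} of $F$ and evaluate everything on $\partial\Omega_0$, where two facts do all the work: $v_0=0$ on $\partial\Omega_0$, and $\nabla v_0=-|\nabla v_0|\,\nu$ there. The latter holds because $v_0$ vanishes on $\partial\Omega_0$ and is positive in $\Omega_0$, so its tangential derivatives along $\partial\Omega_0$ vanish while $\partial_\nu v_0<0$ by the nondegeneracy \eqref{nondegen}. I would write $F=N/D$ with $D=v_0+A^{ij}\big((1+h)v_{0,j}+h_jv_0\big)v_{0,i}$ the denominator in \eqref{F2h} and $N$ the bracketed numerator. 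Since $N$ is affine in the second derivatives $h_{ij}$ while $D$ involves only $h$ and $\nabla h$, the linearization \eqref{Ldefn} satisfies $a^{ij}=D^{-1}\,\partial N/\partial h_{ij}$ and $b^{s}=D^{-1}\,\partial N/\partial h_s-ND^{-2}\,\partial D/\partial h_s$, differentiation in the $h$-slot contributing only to $f$. On $\partial\Omega_0$ every monomial of $N$ or $D$ carrying an explicit factor $v_0$ drops out, so $D|_{\partial\Omega_0}=(1+h)P$ and $N|_{\partial\Omega_0}=(1+h)^2Q$ with $P:=A^{ij}v_{0,i}v_{0,j}$ and $Q:=\sum_j\big(A^{ji}v_{0,i}\big)^2$; and, both second-order terms of $N$ carrying a factor $v_0$, this already gives $(a^{ij})=0$ on $\partial\Omega_0$.

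Next I would differentiate $a^{ij}$. From \eqref{F2h}, $a^{ij}=D^{-1}\big((m-1)(1+h)^2v_0\,(A^{T}A)_{ij}\,P+v_0^2\,\varphi^{\mathrm{sym}}_{ij}\big)$ (only the symmetric part of the coefficient of $w_{ij}$ matters), so on $\partial\Omega_0$ only the $x_k$-derivative landing on the explicit $v_0$ in the first term survives, and, using $D=(1+h)P$,
\[
\partial_k a^{ij}\big|_{\partial\Omega_0}=v_{0,k}\,(m-1)(1+h)\,(A^{T}A)_{ij}.
\]
Contracting with $\nu_k\nu_i\nu_j$ and using $\nabla v_0\cdot\nu=-|\nabla v_0|$ together with $(A^{T}A)_{ij}\nu_i\nu_j=|A\nu|^2$ yields $(\partial_k a^{ij})\nu_k\nu_i\nu_j=-(m-1)(1+h)\,|\nabla v_0|\,|A\nu|^2$, proving the second half of \eqref{propeq1}; the same computation contracted only with $\nu_i$ and summed over $j$ gives $(\partial_j a^{ji})\nu_i=-(m-1)(1+h)\,|\nabla v_0|\,|A\nu|^2$ on $\partial\Omega_0$. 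Note $1+h>0$ and $A=C^{-1}$ is invertible for $h,\nabla h$ small, so $|A\nu|>0$.

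For $b^i$ I would first differentiate $A=C^{-1}$, $C_{ik}=\delta_{ik}-h_kv_{0,i}-hv_{0,ik}$, in the slot $h_s$: since $\partial C_{ik}/\partial h_s=-\delta_{ks}v_{0,i}$ one gets $\partial A^{ij}/\partial h_s=u_iA^{sj}$, where $u:=A\nabla v_0$, i.e.\ $u_i=A^{ij}v_{0,j}$. On $\partial\Omega_0$ the only part of $N$ without a $v_0$ factor is $(1+h)^2(A^{T}A)_{ij}v_{0,i}v_{0,j}=(1+h)^2Q$; a short computation using $\partial A^{ij}/\partial h_s=u_iA^{sj}$ and the identities $u\cdot\nabla v_0=P$, $|u|^2=Q$, gives $\partial N/\partial h_s|_{\partial\Omega_0}=2(1+h)^2Q\,u_s$ and $\partial D/\partial h_s|_{\partial\Omega_0}=(1+h)P\,u_s$. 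Substituting into $b^s=D^{-1}\partial N/\partial h_s-ND^{-2}\partial D/\partial h_s$ and using $D=(1+h)P$, $N=(1+h)^2Q$ on $\partial\Omega_0$, the two contributions are $2(1+h)Qu_s/P$ and $(1+h)Qu_s/P$, which combine to
\[
b^s\big|_{\partial\Omega_0}=\frac{(1+h)\,Q}{P}\,u_s .
\]

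Finally, on $\partial\Omega_0$ I would set $\nabla v_0=-|\nabla v_0|\nu$, so $u=-|\nabla v_0|A\nu$, $P=|\nabla v_0|^2\,(A\nu\cdot\nu)$ and $Q=|\nabla v_0|^2|A\nu|^2$, with $A\nu\cdot\nu>0$ since $A$ is near the identity. Then $b^i\nu_i=(1+h)P^{-1}Q\,(u\cdot\nu)=-(1+h)\,|\nabla v_0|\,|A\nu|^2\le0$, giving \eqref{propeq3}, and combining with $(\partial_j a^{ji})\nu_i=-(m-1)(1+h)|\nabla v_0||A\nu|^2$,
\[
(b^i-\partial_j a^{ji})\nu_i=(m-2)(1+h)\,|\nabla v_0|\,|A\nu|^2 ,
\]
which is negative, zero, or positive according as $m<2$, $m=2$, $m>2$, establishing \eqref{propeq2}. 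Hence (A$_1$), (A$_2$) always hold, (B) holds precisely when $1<m\le2$, and (B$''$) holds for all $m>1$. The main obstacle is the bookkeeping of the linearization: isolating which monomials of $N$ and $D$ survive on $\partial\Omega_0$, computing $\partial A^{ij}/\partial h_s$, and tracking the index contractions so that the clean cancellation producing $b^s|_{\partial\Omega_0}=(1+h)P^{-1}Q\,u_s$ actually appears.
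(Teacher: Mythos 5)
Your proof is correct and follows essentially the same route as the paper: read off $a^{ij}$ and $b^i$ from the explicit formula \eqref{F2h}, use $v_0=0$ and $\nabla v_0=-|\nabla v_0|\nu$ on $\partial\Omega_0$ to kill all but one term, and evaluate. The only difference is presentational — the paper fixes a frame at a boundary point with $\nu=e_n$ so that $v_{0,i}=0$ for $i<n$, whereas you carry out the same contractions invariantly via the quantities $u=A\nabla v_0$, $P=A^{ij}v_{0,i}v_{0,j}$, $Q=|u|^2$; the resulting formulas for $\partial_j a^{ji}\nu_i$ and $b^i\nu_i$ agree with the paper's $\partial_n a^{nn}$ and $b^n$.
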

\begin{proof}
It follows immediately from (\ref{Fh}) that the components $a^{ij}$ all vanish on the boundary since $v_0$ does.

It is convenient to assume that $p$ is the origin and $\nu = (0, \ldots, 0, 1)$.  Hence the tangent plane to $\partial \Omega_0$ is the plane $x_n=0$ and we have $v_{0,i}=0$ for $i=1, \ldots, n-1$ and $v_{0,n}<0$. From (\ref{F2h}) we compute at the origin,
 \[
 \begin{split}
 \partial_n a^{nn} = {} & \frac{(m-1)(1+h)^2 (v_{0,n})^3 A^{nn} A^{jn}A^{jn}}{v_0 + A^{ij} ((1+h)v_{0,j}+h_j v_0) v_{0,i}}  \\
 = {} & (m-1)(1+h) v_{0,n} A^{jn}A^{jn}<0,
 \end{split}
 \]
 and in particular this completes the proof of (\ref{propeq1}).

The quantity $(b^i - \partial_j a^{ji}) \nu_i$ becomes
\begin{equation}\label{KN2}
b^n - \partial_j a^{jn}.
\end{equation}
Observe that because of the $v_0$ term in (\ref{F2h}) which vanishes at the origin and $v_{0,i}(0)=0$ for $i=1, \ldots, n-1$, we see that $\partial_j a^{k\ell}=0$ for $j=1, \ldots, n-1$.  Hence we only have to compute, at the origin,
$$b^n - \partial_n a^{nn}.$$

 Next, noting that at the origin, we compute the variation with respect to $h$ as follows,
 $$\delta A^{jn} 
 = A^{ji} A^{nn} ( \delta h)_i v_{0,n} + A^{ji} A^{\ell n} (\delta h) v_{0,i\ell},$$
 and in particular
 $$\delta A^{nn} =  A^{ni} A^{nn}  (\delta h)_i v_{0,n} + A^{ni}A^{jn} (\delta h)  v_{0,ij}.$$

 Then at the origin,
 \begin{equation} \label{bformula}
 \begin{split}
 b^n =  {} & \frac{-1}{(v_0 + A^{ij} ((1+h)v_{0,j}+h_j v_0) v_{0,i})^2}(1+h) (v_{0,n})^5 A^{nn}A^{nn} (1+h)^2 A^{jn}A^{jn}  \\
 {} & + \frac{2}{v_0 + A^{ij}((1+h)v_{0,j} + h_j v_0) v_{0,i}}  (1+h)^2 A^{jn} A^{nn} (v_{0,n})^3 A^{jn} \\ 
 = {} & (1+h)v_{0,n} A^{jn} A^{jn}.
 \end{split}
 \end{equation}
From
$$b^n -\partial_n a^{nn}  = (2-m)(1+h)v_{0,n} A^{jn} A^{jn},$$
we obtain (\ref{propeq2}).

Assertion (\ref{propeq3}) follows from (\ref{bformula}).
 \end{proof}

\section{Proof of Theorem \ref{mainthm}} \label{sectionlast}

In this section we apply the Nash-Moser Inverse Function Theorem to give the proof of Theorem \ref{mainthm}.
 We use the results above to prove the existence of a smooth solution $h : \ov{\Omega}_0 \times [0,T] \rightarrow \mathbb{R}$ to
 \begin{equation} \label{pmeh}
 \begin{split} 
 h_t(x,t) = {} & F(h,x), \quad (x,t) \in \ov{\Omega}_0 \times [0,T],\\
 h(x,0) = {} & 0, \quad x \in \Omega_0,
 \end{split}
 \end{equation}
where $F$ is the second order operator in $h$ defined by (\ref{F2h}). We will shrink $T>0$ if necessary.  This will then immediately imply Theorem \ref{mainthm}.
 
We will use $\| \cdot \|_{\ell}$ to denote the norms
$$\| f\|_{\ell}:= \max_{2k+|\mathbf{a}|\leq \ell} \ \max_{\ov{\Omega}_0\times[0, T]}| D_x^\mathbf{a} D_t^k f|.$$

  First we construct a ``formal solution'' $\tilde{h}$ of $\Phi(\tilde{h}):= \tilde{h}_t - F(\tilde{h},x)$ at $t=0$, by which we mean a smooth function $\tilde{h}$ on $\ov{\Omega}_0 \times [0,T]$ with the property that
\begin{equation} \label{formasolution}
D^j_t \Phi(\tilde{h}) (x, 0)=0,\quad \text{for } x\in \ov{\Omega}_0, \ j=0, 1,2, \ldots
\end{equation}
Note that if $h$ solves $\Phi(h)=0$ on $\ov{\Omega}_0 \times [0,T]$, then writing this as $h_t=F(x, h, D h, D^2 h)$, differentiating $k$ times in $t$ then evaluating at $t=0$ gives $D^k_t h (x, 0)$ in terms of the partial derivatives $D^a_{2} D^b_{3} D^c_{4} F(x, 0,0,0)$ for $1\leq a+b+c \leq k-1$ and $D^l_t h (x, 0)$ for $0\leq l \leq k-1$.  
From a recursive argument starting from $a_0(x) := h(x,0)=0$ we obtain formulae for $a_j(x):=D_t^j h(x,0)$ involving only the partial derivatives $D^a_{2} D^b_{3} D^c_{4} F(x,0,0,0)$ for $1\leq a+b+c \leq k-1$.  Now let $\tilde{h}$ be a smooth function on $\ov{\Omega}_0 \times [0,T]$ whose Taylor series at $t=0$ is 
$$\sum_{j=0}^{\infty} \frac{a_j(x)}{j!} t^j.$$
Note that since $\tilde{h}(x,t) \rightarrow 0$ uniformly in $x$ as $t\to 0$, we may and will assume that $T$ is chosen suitably small so that $\| \tilde{h} \|_{\ell}$ are as small as we like. By construction, $\tilde{h}$ solves (\ref{formasolution}).

Define a function space
$$\mathcal{B}:=\{h(x, t)\in C^{\infty}(\ov{\Omega}_0 \times[0, T]) \ | \  	D_t^j h(x, 0)=0, \ \textrm{for } j=0,1,2,\ldots \},$$
and equip it with the seminorms $\| \cdot \|_{\ell}$ defined above.

 \begin{lemma}
The seminorms $\|\cdot \|_{\ell}$ give $\mathcal{B}$ the structure of a tame space as in \cite{H}. 
 \end{lemma}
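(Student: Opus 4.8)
The plan is to exhibit $(\mathcal{B},\{\|\cdot\|_\ell\})$ as a \emph{tame direct summand} of a space of smooth functions on a compact manifold with corners, and then to invoke Hamilton's results \cite{H}: such function spaces are tame, and a tame direct summand of a tame space is tame. Thus everything reduces to producing a tame linear projection.

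First I would record, following \cite{H}, that $C^\infty(\ov{\Omega}_0\times[-1,T])$ graded by the norms $\|\cdot\|_\ell$ is a tame Fréchet space. Indeed $\ov{\Omega}_0\times[-1,T]$ is a compact manifold with corners; embedding it as a submanifold with corners of a closed manifold $\hat M$ (for instance the product of the double of $\Omega_0$ with a circle), one uses a Seeley-type continuous linear extension operator together with restriction to realize $C^\infty(\ov{\Omega}_0\times[-1,T])$ as a tame direct summand of $C^\infty(\hat M)$, and $C^\infty(\hat M)$ is tame by \cite{H}. The same applies to $C^\infty(\ov{\Omega}_0\times[-1,0])$.

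The key step is to move from the interval $[0,T]$ to the slightly larger $[-1,T]$. Set $\mathcal{B}':=\{F\in C^\infty(\ov{\Omega}_0\times[-1,T]) : F\equiv 0 \text{ on } \ov{\Omega}_0\times[-1,0]\}$. I claim the restriction map $F\mapsto F|_{\ov{\Omega}_0\times[0,T]}$ is an isomorphism $\mathcal{B}'\to\mathcal{B}$ of graded Fréchet spaces, with inverse given by extension by zero for $t\le 0$: the zero extension of an element of $\mathcal{B}$ is genuinely $C^\infty$ precisely because of the infinite-order vanishing at $t=0$ built into the definition of $\mathcal{B}$, and both maps are isometries for every seminorm $\|\cdot\|_\ell$, hence tame. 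So it suffices to prove $\mathcal{B}'$ is tame. Now $\mathcal{B}'=\ker R$ for the restriction map $R:C^\infty(\ov{\Omega}_0\times[-1,T])\to C^\infty(\ov{\Omega}_0\times[-1,0])$, and $R$ admits a tame right inverse $E$, namely a Seeley-type extension operator across the face $t=0$ with $RE=\mathrm{id}$. Then $P:=\mathrm{id}-ER$ is a tame linear projection with $\mathrm{im}\,P=\ker R=\mathcal{B}'$, so $C^\infty(\ov{\Omega}_0\times[-1,T])=\mathcal{B}'\oplus\mathrm{im}\,E$ as a tame direct sum; hence $\mathcal{B}'$, and therefore $\mathcal{B}$, is tame.

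The only real subtlety I anticipate is a temptation to proceed differently, by splitting $\mathcal{B}$ off from $C^\infty(\ov{\Omega}_0\times[0,T])$ directly as the kernel of the $\infty$-jet at $t=0$ and using a Borel extension to build the complementary summand; this does \emph{not} work, because the Borel construction requires cutoff scales depending on the function and so is not a tame linear map. Replacing it by honest extension by zero across $t=0$ — available exactly because elements of $\mathcal{B}$ vanish to infinite order there — is what makes the argument go through, after which the remaining ingredients (tameness of $C^\infty$ on a compact manifold with corners, and the existence of a tame Seeley extension operator) are standard and contained in \cite{H} and Seeley's work.
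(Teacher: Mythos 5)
Your reduction strategy — extend by zero across $t=0$ to identify $\mathcal{B}$ with $\mathcal{B}'\subset C^\infty(\ov{\Omega}_0\times[-1,T])$, then split $\mathcal{B}'$ off as a tame direct summand via a Seeley extension operator $E$ and the projection $\mathrm{id}-ER$ — is sound in outline and is a legitimate alternative to what the paper does. However, there is a real gap in your very first step, and it happens to be the one place where the lemma has any content: you assert that ``$C^\infty(\ov{\Omega}_0\times[-1,T])$ graded by the norms $\|\cdot\|_\ell$ is a tame Fréchet space'' and cite \cite{H} for it, but this is not something \cite{H} gives you directly. The grading $\|f\|_\ell=\max_{2k+|\mathbf{a}|\le\ell}\sup|D_x^{\mathbf{a}}D_t^k f|$ is \emph{parabolic}: each time derivative counts with weight two. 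Hamilton's Theorem II.1.3.6 establishes tameness of $C^\infty$ of a compact manifold with respect to the usual \emph{isotropic} grading. These two gradings are not tamely equivalent: one has $\|f\|_\ell^{\mathrm{parab}}\le\|f\|_\ell^{\mathrm{iso}}$, but the reverse comparison costs $\|f\|_\ell^{\mathrm{iso}}\le\|f\|_{2\ell}^{\mathrm{parab}}$, which is a multiplicative rather than additive shift in the index, so the identity map is not a tame isomorphism between the two graded structures. Consequently tameness in the parabolic grading requires a separate argument.

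What makes the parabolic case go through — and what your proof silently assumes — is precisely the modification the paper records: one reruns Hamilton's Fourier-analytic proof of Theorem II.1.3.6 but with the anisotropic weight function $w(\xi,\tau)=\log(1+|\xi|)+\tfrac12\log(1+|\tau|)$, so that $e^{nw}\sim(1+|\xi|)^n(1+|\tau|)^{n/2}$ reproduces the norms $\|\cdot\|_\ell$ up to a fixed degree shift. Once that is in place, the rest of your decomposition argument (extension by zero is a graded isometry because of the infinite-order vanishing, Seeley extension acts only in $t$ and so is tame for the parabolic grading just as for the isotropic one, and $\mathrm{id}-ER$ is a tame projection onto $\ker R=\mathcal{B}'$) does indeed finish the job. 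You are right that a Borel-type extension would not be tame; but the point you must not skip is the choice of weight, which is the actual substance of the lemma.
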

 \begin{proof} 
This follows from  the proof of \cite[Theorem II.1.3.6]{H}. We use coordinates $(\xi_1, \ldots, \xi_n, \tau)$ on $\mathbb{R}^{n+1}$ and take the weight function
to be $$w = \log (1+|\xi|) +\frac{1}{2} \log (1+|\tau|)$$ for $|\xi|^2 = \xi_1^2 + \cdots \xi_n^2$.
 \end{proof}

Recall that Frechet spaces are metrizable.  Indeed, given seminorms $\| \cdot \|_k$ one can write down a metric,
$$d(f, g) = \sum_{k=0}^{\infty} 2^{-k} \frac{ \| f-g\|_k}{1+\| f-g\|_k}.$$
Let $\mathcal{D}$ be a ball of small radius $\sigma>0$, with respect to this metric, centered at the origin in $\mathcal{B}$.  
Define an operator $\hat{\Phi}: \mathcal{D} \subset \mathcal{B}  \rightarrow \mathcal{B}$ by
\begin{equation}
\hat{\Phi}(h) = (h+\tilde{h})_t - F(h+\tilde{h},x),
\end{equation}
where $\tilde{h}$ is our formal solution given above. Our goal is to find $h$ so that $\hat{\Phi}(h)=0$, so that our desired solution is $h+\tilde{h}$.

Observe that $F(h,x)$ is not defined for all $h \in \mathcal{B}$ because of the denominators which may be zero for some $h$.  However, 
shrinking $\sigma>0$ and $T>0$ as necessary, we may assume that for a fixed $r$, the $C^r$ norms of $h+\tilde{h}$ for $h \in \mathcal{D}$ are as small as we like.  Moreover, $D_t^j\hat{\Phi}(h)=0$ at $t=0$ for all $j$ by the definition of $\tilde{h}$ and $\mathcal{B}$. 
Hence the operator is well-defined.
Since $\hat{\Phi}$ is a differential operator of degree $2$ it satisfies the ``tame estimate''
$$\| \hat{\Phi} (h)\|_{\ell} < C(\ell)(1+ \| h\|_{\ell+2}).$$
The operator $\hat{\Phi}$ is a \emph{smooth tame map} as defined in \cite[p. 143]{H}.

In order to apply the Nash-Moser Inverse Function Theorem, we need to consider the linearization $D\hat{\Phi}$ of $\hat{\Phi}$, which is a map
$$D\hat{\Phi} : \mathcal{D} \times \mathcal{B} \rightarrow \mathcal{B}.$$
We need to show that for all $h \in \mathcal{D}$ the equation
\begin{equation} \label{DPhih}
D\hat{\Phi} (h) w = g,
\end{equation}
has a unique solution $w = (D\hat{\Phi})^{-1}(h)g$ for all $g \in \mathcal{B}$ and that the family of inverses
$$(D\hat{\Phi})^{-1} : \mathcal{D} \times \mathcal{B} \rightarrow \mathcal{B}$$
is a smooth tame map. 

The equation (\ref{DPhih}) is given by 
\begin{equation} \label{DPhih2}
w_t = Lw + g
\end{equation}
where $L$ is the operator $Lw = a^{ij}w_{ij} + b^i w_i + fw$ given in (\ref{Ldefn}), with coefficients $a^{ij}$, $b^i$, $f$ as described in Section \ref{sectiontrans} except that $h$ in the coefficients is replaced by $h+\tilde{h}$.  We assume now that either $n=1$ or $1<m\le 2$, so that by Proposition \ref{propPME} the assumptions of Theorem \ref{theoremmain} are satisfied.  Then  for any $g \in \mathcal{B}$, the equation (\ref{DPhih2}) has a unique solution $w \in \mathcal{B}$.  Moreover, for $\ell =0,1,2,\ldots,$ we have
	\begin{equation}\label{tameest2}
		\|w\|_{C^{\ell}} \leq C(\ell) \sup_{t\in[0,T]}\bigg( 1+\sum_{i,j} \|a^{ij}\|_{C^{\ell+r}} + \sum_i \|b^i\|_{C^{\ell+r}}+ \| f\|_{C^{\ell+r}} + \|g\|_{C^{\ell+r}}\bigg),
	\end{equation}
	for $r=r(n)$ and 
 constants $C(\ell)$ depending only on $n, \ell, T, \Omega_0$ and $C^r$ bounds of $a^{ij}, b^i, f$.
 
 On the other hand, the coefficients $a^{ij}, b^i, f$ are smooth functions of $h$ and $x$, and by the equation satisfied by $h$, we can replace a time derivative of $h$ by two spatial derivatives. Hence, increasing $r$ if necessary, we have
 \begin{equation} \label{tamingw}
 \| w\|_{\ell} \le C(\ell) (1+ \| h \|_{\ell+r} + \| g \|_{\ell+r} ),
 \end{equation}
 where we are using the equation satisfied by $w$ in order to apply (\ref{tameest2}). 
 Observe that $(D\hat{\Phi})^{-1} : \mathcal{D} \times \mathcal{B} \rightarrow \mathcal{B}$ is continuous.  Indeed, suppose that $(h_k, g_k)$ are in the domain of this operator with $(h_k, g_k) \rightarrow (h,g)$ and $w_k$ satisfies $D\Phi(h_k) w_k = g_k$.  Then the estimate (\ref{tamingw}) and Proposition \ref{propunique} imply that 
 $w_k$ converges to the unique $w \in \mathcal{B}$ solving $D\hat{\Phi}(h)w = g$. 
From \cite{H}, the estimate (\ref{tamingw}) implies that  the family of inverses $(D\hat{\Phi})^{-1} : \mathcal{D} \times \mathcal{B} \rightarrow \mathcal{B}$ is a smooth tame map.  

We can now apply the Nash-Moser Inverse Function Theorem \cite[III.1.1.1]{H}, which states that $\hat{\Phi}$ is locally invertible and each local inverse $\hat{\Phi}^{-1}$ is a smooth tame map.  

Define $\tilde{\rho}:= \hat{\Phi}(0) = \tilde{h}_t - F(\tilde{h},x) \in \mathcal{B}$.  Then there exists a neighborhood $\mathcal{U}$ of $\tilde{\rho}$ such that for each $\rho \in \mathcal{U}$ there exists $h\in \mathcal{D}$ with $\hat{\Phi}(h)=\rho$, namely
$$(h+\tilde{h})_t = F(h+\tilde{h},x) + \rho.$$

It is now standard to complete the proof (see for example \cite[p. 195-196]{H}).  
For a small $\ve>0$ define a function $\rho_{\ve}$ by
$$\rho_{\ve}(x,t) = \begin{cases}
	0 & t\in [0, \varepsilon) \\
	(\tilde{h}_t - F(\tilde{h},x))(x, t-\ve) & t\in [\varepsilon, T]  \\
\end{cases} $$
 By our construction of $\tilde{h}$, the function $\rho_{\ve}$ is smooth on $\ov{\Omega}_0 \times [0,T]$. In addition, we have $\| \rho_{\ve} -\tilde{\rho} \|_{\ell} \rightarrow 0$ as $\ve \rightarrow 0$ for every $\ell$.  Hence for $\ve>0$ sufficiently small we have $\rho_{\ve} \in \mathcal{U}$, where $\mathcal{U}$ is the neighborhood of $\tilde{\rho}$ given by the Nash-Moser Theorem. Hence there exists a solution $h$ to 
 $$(h+\tilde{h})_t = F(h+\tilde{h}, x)+ \rho_{\ve} \quad \textrm{on } \ov{\Omega}_0 \times [0,T].$$
 In particular, we have
 $$(h+\tilde{h})_t = F(h+\tilde{h},x), \quad \textrm{on } \ov{\Omega}_0 \times [0,\ve].$$
 This completes the proof of the Theorem \ref{mainthm}.


\begin{thebibliography}{0}
\bibitem{ADN} Agmon, S., Douglis, A., Nirenberg, L. {\em Estimates near the boundary for solutions of elliptic partial differential equations satisfying general boundary conditions, I}, Comm. Pure Appl. Math. 12 (1959), 623--727
\bibitem{An}  Angenent, S. {\em Analyticity of the interface of the porous media equation after the waiting time},
Proc. Amer. Math. Soc. 102 (1988), no. 2, 329--336
\bibitem{AV} Aronson, D. G., V\'azquez, J. L. {\em Eventual  $C^{\infty}$-regularity and concavity for flows in one-dimensional porous media}, 
Arch. Rational Mech. Anal. 99 (1987), no. 4, 329--348
\bibitem{CF80} Caffarelli, L.A., Friedman, A. {\em Regularity of the free boundary of a gas flow in an $n$-dimensional porous medium},
Indiana Univ. Math. J. 29 (1980), no. 3, 361--391
\bibitem{CVW} Caffarelli, L. A., Vázquez, J. L., Wolanski, N. I. {\em Lipschitz continuity of solutions and interfaces of the  $N$-dimensional porous medium equation}, Indiana Univ. Math. J. 36 (1987), no. 2, 373--40
\bibitem{CW} Caffarelli, L.A., Wolanski, N.I. {\em $C^{1, \alpha}$  regularity of the free boundary for the  $N$-dimensional porous media equation}, 
Comm. Pure Appl. Math. 43 (1990), no. 7, 885--902
\bibitem{ChW} Chau, A., Weinkove, B.  {\em Non-preservation of $\alpha$-concavity for the porous medium equation}, preprint, arXiv:2011.03063
\bibitem{DH98} Daskalopoulos, P., Hamilton, R. {\em Regularity of the free boundary for the porous medium equation}
J. Amer. Math. Soc. 11 (1998), no. 4, 899--965
\bibitem{DH99} Daskalopoulos, P., Hamilton, R. {\em The free boundary in the Gauss curvature flow with flat sides}, 
J. Reine Angew. Math. 510 (1999), 187--227
\bibitem{DHL} Daskalopoulos, P., Hamilton, R., Lee, K. {\em All time $C^{\infty}$ regularity of the interface in degenerate
diffusion: a geometric approach}, Duke Math. J. 108 (2001), no. 2, 295--327
\bibitem{DPT} Dong, H., Phan, T., Tran, H.V. {\em Nondivergence form degenerate linear parabolic equations on the upper half space}, preprint, arXiv: 2306.11567
\bibitem{FP} Feehan, P.M.N., Pop, C.A. {\em A Schauder approach to degenerate-parabolic partial differential equations with unbounded coefficients}, J. Differential Equations 254 (2013), no. 12, 4401--4445
\bibitem{Fi56}  Fichera, G. {\em Sulle equazioni differenziali lineari ellittico-paraboliche del secondo ordine},
Atti Accad. Naz. Lincei Mem. Cl. Sci. Fis. Mat. Natur. Sez. Ia (8) 5 (1956), 1--30
\bibitem{Fi60} Fichera, G. {\em On a unified theory of boundary value problems for elliptic-parabolic equations of second order}, Boundary problems in differential equations, pp. 97--120, University of Wisconsin Press, Madison, WI, 1960
\bibitem{F} Friedman, A. {\em
Partial differential equations}, Holt, Rinehart and Winston, Inc., New York-Montreal, Que.-London, 1969. vi+262 pp.
\bibitem{H} Hamilton, R.S. {\em The inverse function theorem of Nash and Moser}, Bull. Amer. Math. Soc. 7 (1982), no. 1, 65--222
\bibitem{Hanz} Hanzawa, E. {\em Classical solutions of the Stefan problem}, Tohoku Math. J. (2) 33 (1981), no. 3, 297--335
\bibitem{HK}  H\"ollig, K., Kreiss, H.-O. {\em $C^{\infty}$-regularity for the porous medium equation},
Math. Z. 192 (1986), no. 2, 217--224
\bibitem{IS} Ishige, K., Salani, P. {\em Convexity breaking of the free boundary for porous medium equations}, Interfaces Free Bound. 12 (2010), no. 1, 75--84
\bibitem{JX}  Jin, T., Xiong, J. {\em Optimal boundary regularity for fast diffusion equations in bounded domains},
Amer. J. Math. 145 (2023), no. 1, 151--219
\bibitem{Koch} Koch, H. {\em Non-Euclidean singular integrals and the porous medium equation}, Habilitation
thesis, University of Heidelberg, 1999.
\bibitem{KN} Kohn, J.J., Nirenberg, L. {\em Degenerate elliptic-parabolic equations of second order}, Comm. Pure Applied Math. 20 (1967), 797--872
\bibitem{MN} Morrey, C.B., Nirenberg, L. {\em On the analyticity of solutions of linear elliptic systems of partial differential Equations}, Comm. Pure Applied Math. 10 (1957), 271--290
\bibitem{N} Nirenberg, L. {\em On elliptic differential equations}, Annali della Scuola Normale Superiore di Pisa, Classe di Scienze 3e, tome 13, no 2 (1959), 115--162
\bibitem{Ol} Ole\u{i}nik, O. A. {\em  On the smoothness of solutions of degenerating elliptic and parabolic equations}, Dokl. Akad. Nauk SSSR163 (1965), 577--580
\bibitem{OR}  Ole\u{i}nik, O. A., Radkevi\v{c}, E. V. {\em Second order equations with nonnegative characteristic form}, Plenum Press, New York-London, 1973, vii+259 pp
\bibitem{Sh} Shmarev, S. I. {\em Interfaces in multidimensional diffusion equations with absorption terms}, 
Nonlinear Anal. 53 (2003), no. 6, 791--828
\bibitem{V}  V\'azquez, J.L. {\em The porous medium equation}, 
Oxford Math. Monogr., The Clarendon Press, Oxford University Press, Oxford, 2007, xxii+624 pp
\end{thebibliography}
\end{document}